\newtheorem{Satz}{Theorem} 
\newtheorem{Lemm}{Lemma} 
\newtheorem{Def}{Definition}
\newtheorem{example}{Example}
\newtheorem{remark}[Lemm]{Remark}
\newcommand{\Simp}{\ensuremath{\mathcal{S}}}
\newcommand \Id  {\operatorname{Id}}
\newcommand \eps   {\varepsilon}
\newcommand{\lel}{\left\langle}
\newcommand{\rir}{\right\rangle}
\newcommand \Iff   {\Leftrightarrow}
\newcommand \N   {\mathbb{N}}
\newcommand \R   {\mathbb{R}}
\newcommand \K   {\mathcal{K}}
\newcommand \Kinf{\mathcal{K_\infty}}
\newcommand \KL  {\mathcal{KL}}
\newcommand \LL  {\mathcal{L}}
\newcommand \PD   {\mathcal{P}}
\newcommand \SSet   {\mathcal{S}}
\newcommand \cK      {\cal{K}}
\title{Input-to-state stability of nonlinear impulsive systems
\thanks{This research is funded by the German Research Foundation (DFG) as a part of Collaborative Research Centre 637 "Autonomous Cooperating Logistic Processes - A Paradigm Shift and its Limitations".}
}
\author{Sergey Dashkovskiy
\thanks{S. Dashkovskiy is with
Erfurt University of Applied Sciences, Altonaer Stra\ss e 25, 99085 Erfurt, Germany
({\tt\small sergey.dashkovskiy@fh-erfurt.de})}%
\and Andrii Mironchenko
\thanks{A. Mironchenko is with Institute of Mathematics, University of W\"urzburg, 
Emil-Fischer-Stra\ss e 40, 97074 W\"urzburg, Germany.
Corresponding author, 
{\tt\small andrii.mironchenko@mathematik.uni-wuerzburg.de}.
}
}
\begin{document}

\maketitle

\begin{abstract}
We prove that impulsive systems, which possess an ISS Lyapunov function, are ISS for time sequences satisfying the fixed dwell-time condition.
If an ISS Lyapunov function is the exponential one, we provide a stronger result, which guarantees uniform ISS of the whole system over sequences satisfying the generalized average dwell-time condition.
Then we prove two small-gain theorems that provide a construction of an ISS Lyapunov function for an interconnection of impulsive systems, if the ISS-Lyapunov functions for subsystems are known. The construction of local ISS Lyapunov functions via linearization method is provided. Relations between small-gain and dwell-time conditions as well as between different types of dwell-time conditions are also investigated.
Although our results are novel already in the context of finite-dimensional systems, we prove them for systems based on differential equations in Banach spaces that makes obtained results considerably more general.
\end{abstract}

\begin{keywords}
impulsive systems, nonlinear control systems, infinite-dimensional systems, input-to-state stability, Lyapunov methods
\end{keywords}

\begin{AMS}
34A37, 37C75, 93D30, 93C10, 93C25
\end{AMS}

\pagestyle{myheadings}
\thispagestyle{plain}

\section{Introduction}

Often in the modeling of real phenomena one has to consider systems, which exhibit both continuous and discontinuous behavior.
A general framework for modeling of such phenomena is a hybrid systems theory
\cite{HCN06}, \cite{GST12}.
Impulsive systems are hybrid systems which state can jump only at moments of time, which are given in advance and do not depend on the state of the system.

The first monograph devoted entirely to impulsive systems is \cite{SaP95}. Recent developments in this field one can find, in particular, in \cite{HCN06}, \cite{Sta09}.

We are interested in stability of impulsive systems with respect to external inputs.  The central concept in this theory is the notion of input-to-state stability (ISS), introduced by E. Sontag in \cite{Son89}, for survey see \cite{Son08}, \cite{DES11}.

Input-to-state stability of impulsive systems has been investigated in recent papers
\cite{HLT08} (finite-dimensional systems)
and \cite{ChZ09}, \cite{LLX11}, \cite{YSH11} (time-delay systems). In \cite{DKM11} ISS of interconnected impulsive systems with and without time-delays has been investigated.
If both continuous and discontinuous dynamics of the system (taken separately from each other) are ISS, then the resulting dynamics of a finite-dimensional impulsive system is also ISS for all impulse time sequences (it is even strongly uniformly ISS, see \cite[Theorem~2]{HLT08}).

More interesting is a study of systems for which either continuous or discrete dynamics is not ISS.
In this case input-to-state stability of an impulsive system cannot be achieved for all sequences of impulse times and one has to introduce restrictions on the class of impulse time sequences to assure ISS.
These conditions are called dwell-time conditions.

The study of ISS of finite-dimensional impulsive systems has been performed in \cite{HLT08}, where it was proved that impulsive systems possessing an exponential ISS-Lyapunov function are uniformly ISS over impulse time sequences satisfying so-called average dwell-time (ADT) condition.
In \cite{ChZ09} a sufficient condition in terms of exponential Lyapunov-Razumikhin functions is provided, which ensures the uniform ISS of impulsive time-delay systems over impulse time sequences satisfying fixed dwell-time (FDT) condition.

In the current literature only exponential ISS Lyapunov functions (or exponential ISS Lyapunov-Razumikhin functions, exponential ISS Lyapunov-Krasovskii functionals) have been exploited for analysis of ISS of impulsive systems.
This restrains the class of systems, which can be investigated by Lyapunov methods, since to our knowledge it is not proved (or disproved) that an exponential ISS-Lyapunov function always exists and even if it does exist, the construction of such function may be a rather sophisticated task.

Another restrictions arise in the study of interconnections of ISS impulsive systems via small-gain theorems (see \cite{JMW96} and \cite{DRW10}). Even if ISS-Lyapunov functions for all subsystems are exponential, an ISS Lyapunov function of the interconnection may be non-exponential, if the gains are nonlinear. Hence many problems can be hardly treated with existing tools. 

Our aim is to develop tools for analysis of ISS of impulsive systems for a broader class of problems. The paper consists of two main parts: in the first part we develop novel Lyapunov-type sufficient conditions for ISS of single impulsive systems and in the second part we consider interconnections of input-to-state stable impulsive systems.

We start by proving that existence of an ISS Lyapunov function (not necessarily exponential) for an impulsive system implies its input-to-state stability over impulsive sequences satisfying nonlinear fixed dwell-time (FDT) condition.
Under slightly weaker FDT condition the uniform global stability of the system over corresponding class of impulse time sequences can be proved.

Furthermore, for the case, when an impulsive system possesses an exponential Lyapunov function, we
generalize the result from \cite{HLT08}, by introducing the generalized average dwell-time (gADT) condition and proving, that an impulsive system, which possesses an exponential ISS Lyapunov function is uniformly ISS over the class of impulse time sequences, which satisfy the gADT condition. We argue, that gADT condition provides in certain sense tight estimates of the class of impulsive time sequences, for which the system is ISS.

Next we develop methods for construction of global as well as local ISS-Lyapunov functions for impulsive systems.

In Section \ref{ISS_Kopplungen} we prove a small-gain theorem for interconnections of impulsive systems, analogous to corresponding theorem for infinite-dimensional systems with continuous behavior \cite{DaM12b}.
Next we show, that if all subsystems possess exponential ISS Lyapunov functions, and the gains are power functions, then the exponential ISS Lyapunov function for the whole system can be constructed.
This result generalizes Theorem 4.2 from \cite{DKM11}, where this statement for linear gains has been proved.

Also we show, how the exponential local ISS-Lyapunov functions for certain classes of control systems can be constructed via linearization method.

Besides these questions we investigate the relations between different types of dwell-time conditions in Section \ref{Versch_KGB}.  
In Section \ref{DT_SGC_Relation} we discuss an interplay between small-gain and dwell-time conditions in the case of interconnected systems, which appears at the stage of selection of gains.

Although our results are novel already in the context of finite-dimensional systems, we prove them for the case of abstract systems, based on equations in Banach spaces. This makes the results more general. The framework we adopt from the paper \cite{DaM12b}, where ISS of infinite-dimensional systems without impulses has been investigated.

The structure of the paper is as follows.
In Section \ref{Prelim} we provide notation and main definitions.
In Section \ref{ISS_Single_Sys} the sufficient conditions for ISS of a single impulsive system via ISS Lyapunov functions are proved.
In Section \ref{Linearisierung_Imp} we show, how a local exponential ISS Lyapunov function can be constructed with the help of linearization. Section \ref{Versch_KGB} is devoted to the relations between different types of dwell-time conditions.
Next we investigate the ISS of interconnected systems via small-gain theorems.
Section \ref{Schluss} concludes the paper.

\section{Preliminaries}
\label{Prelim}

Let $X$ and $U$ denote a state space and a space of input values respectively, and let both of them be Banach.
Take the space of admissible inputs as $U_c:=PC([t_0,\infty),U)$, i.e. the space of piecewise right-continuous functions from $[t_0,\infty)$ to $U$ equipped with the norm
\[
\|u\|_{U_c}:= \sup\limits_{t \geq t_0}\|u(t)\|_U.
\]

Let $T = \{t_1,\ t_2,\ t_3, \ldots \}$ be a strictly increasing sequence of impulse times without finite accumulation points.

Consider a system of the form
\index{system!impulsive}
\begin{equation}
\label{ImpSystem}
\left \{
\begin {array} {l}
{ \dot{x}(t)=Ax(t) + f(x(t),u(t)),\quad t \in [t_0,\infty)  \backslash T,} \\
{ x(t)=g(x^-(t),u^-(t)),\quad t \in T,}
\end {array}
\right.
\end {equation}
where $x(t) \in X$, $u(t) \in U$, $A$ is an infinitesimal generator of a $C_0$-semigroup on $X$ and $f,g:X \times U \to X$.

Equations \eqref{ImpSystem} together with the sequence of impulse times $T$ define an impulsive system.
The first equation of \eqref{ImpSystem} describes the continuous dynamics of the system, and the second describes the jumps of the state at impulse times.

We assume that for each initial condition a solution of the problem \eqref{ImpSystem} exists and is unique.
Note that from the continuity assumptions on the input $u$ it follows that $x(t)$ is piecewise-continuous, and $x^-(t) = \lim\limits_{s \to t-0}x(s)$ exists for all $t \geq t_0$.

For a given set of impulse times by $\phi(t,t_0,x,u)$ we denote the state of \eqref{ImpSystem} corresponding to the initial value $x \in X$, the initial time $t_0$ and to the input $u \in U_c$ at time $t \geq t_0$.

Note that the system \eqref{ImpSystem} is not time-invariant, that is, $\phi(t_2,t_1,x,u) = \phi(t_2+s,t_1+s,x,u)$ doesn't hold for all $\phi_0 \in X$, $u \in U_c$, $t_2 \geq t_1$ and all $s \geq -t_1$.

However, it holds
\begin{eqnarray}
\label{ZeitInvGleichung_Imp}
\phi(t_2,t_1,x,u) = \phi_s(t_2+s,t_1+s,x,u),
\end{eqnarray}
where $\phi_s$ is a trajectory corresponding to the system \eqref{ImpSystem} with impulse time sequence $T_s:= \{t_1+s,\ t_2+s,\ t_3+s, \ldots \}$.

This means that the trajectory of the system \eqref{ImpSystem} with initial time $t_0$ and impulse time sequence $T$ is equal to the trajectory of \eqref{ImpSystem} with initial time $0$ and impulse time sequence $T_{-t_0}$.
Therefore we will assume that $t_0$ is some fixed moment of time and will investigate the stability properties of the system \eqref{ImpSystem} w.r.t. this initial time.

We assume throughout this paper that $x \equiv 0$ is an equilibrium of the unforced system \eqref{ImpSystem}, that is $f(0,0)=g(0,0)=0$.

\begin{Def}
For the formulation of stability properties the following classes of functions are useful:
\begin{equation}\notag
\begin{array}{ll}
{\PD} &:= \left\{\gamma:\R_+\rightarrow\R_+\left|\ \gamma\mbox{ is continuous, }
\right. \gamma(0)=0,\ \gamma(r)>0, r>0 \right\} \\
{\K} &:= \left\{\gamma \in \PD \left|\ \gamma \mbox{ is strictly increasing}  \right. \right\}\\
{\K_{\infty}}&:=\left\{\gamma\in\K\left|\ \gamma\mbox{ is unbounded}\right.\right\}\\
{\LL}&:=\left\{\gamma:\R_+\rightarrow\R_+\left|\ \gamma\mbox{ is continuous and strictly}\right. \text{decreasing with } \lim\limits_{t\rightarrow\infty}\gamma(t)=0\right\}\\
{\KL} &:= \left\{\beta:\R_+\times\R_+\rightarrow\R_+\left|\ \beta \mbox{ is continuous,}\right. \beta(\cdot,t)\in{\K},\ \forall t \geq 0,\  \beta(r,\cdot)\in {\LL},\ \forall r >0\right\}
\end{array}
\end{equation}
Functions of class $\PD$ are called positive definite functions.
\end{Def}

Let us introduce the stability properties for system \eqref{ImpSystem} which we deal with.
\begin{Def}
\label{ISS_One_System}
\index{LISS}
\index{ISS}
For a given sequence $T$ of impulse times we call a system \eqref{ImpSystem} \textit{locally input-to-state stable (LISS)} if there exist $\rho>0$ and $\beta\in\KL,\ \gamma\in\K_{\infty}$, such that $\forall x \in X:\ \|x\|_X~\leq~\rho$, $\forall u \in U_c:\ \|u\|_{U_c} \leq \rho$, $\forall t\geq t_0$ it holds
\begin{equation}
\label{ISS_ImpSys}
\|\phi(t,t_0,x,u)\|_X \leq \beta(\|x\|_X,t-t_0) + \gamma(\|u\|_{U_c}).
\end{equation}
System \eqref{ImpSystem} is \textit{input-to-state stable (ISS)}, if \eqref{ISS_ImpSys} holds for all $x \in X$, $u \in U_c$.

System \eqref{ImpSystem} is called \textit{uniformly ISS} over a given set $\Simp$ of admissible sequences of impulse times if it is ISS for every sequence in $\Simp$, with $\beta$ and $\gamma$ independent of the choice of the sequence from the class $\Simp$.
\end{Def}

\begin{Def}
\label{GS_One_System}
\index{GS}
For a given sequence $T$ of impulse times we call system \eqref{ImpSystem} \textit{globally stable (GS)} if there exist $\xi,\ \gamma~\in~\Kinf$, such that $\forall x \in X$, $\forall u \in U_c$, $\forall t\geq t_0$ it holds
\begin{equation}
\label{GS_ImpSys}
\|\phi(t,t_0,x,u)\|_X \leq \xi(\|x\|_X) + \gamma(\|u\|_{U_c}).
\end{equation}
The impulsive system \eqref{ImpSystem} is \textit{uniformly GS} over a given set $\Simp$ of admissible sequences of impulse times if \eqref{GS_ImpSys} holds for every sequence in $\Simp$, with $\beta$ and $\gamma$ independent of the choice of the sequence.
\end{Def}

%

In the next section we are going to find certain sufficient conditions for an impulsive system of the form \eqref{ImpSystem} to be ISS.

\section[ISS of a single impulsive system]{Lyapunov ISS theory for an impulsive system}
\label{ISS_Single_Sys}

For analysis of (L)ISS of impulsive systems we exploit (L)ISS-Lyapunov functions.
\begin{Def}
\index{ISS-Lyapunov function!impulsive systems}
\label{ISS_Imp_LF}
A continuous function $V:D \to \R_+$, $D \subset X$, $0 \in int(D)$
is called a \textit{LISS-Lyapunov function} for \eqref{ImpSystem} if $\exists \ \psi_1,\psi_2 \in \Kinf$, such that
\begin{equation}
\label{LF_ErsteEigenschaft}
\psi_1(\|x\|_X) \leq V(x) \leq \psi_2(\|x\|_X), \quad x \in D
\end{equation}
holds and $\exists \rho>0,\ \chi \in \Kinf$, $\alpha \in \PD$ and continuous function $\varphi:\R_+ \to \R$ with $\varphi(x)=0$ $\Iff$ $x=0$,  such that
$\forall x \in X:\ \|x\|_X \leq \rho$, $\forall \xi \in U:\ \|\xi\|_U \leq \rho$
it holds
\begin{equation}
\label{ISS_LF_Imp}
V(x)\geq\chi(\|\xi\|_U)\Rightarrow
\left \{
\begin {array} {l}
{ \dot{V}_u(x) \leq - \varphi(V(x))} \\
{ V(g(x,\xi))\leq  \alpha(V(x)),}
\end {array}
\right.
\end{equation}
for all $u \in U_c$, $\|u\|_{U_c} \leq \rho$ and $u(0)=\xi$.
For a given input value $u \in {U_c}$ the Lie derivative $\dot{V}_u(x)$ is defined by
\begin{equation}
\label{LyapAbleitung_2}
\dot{V}_u(x)=\mathop{\overline{\lim}} \limits_{t \rightarrow +0} {\frac{1}{t}(V(\phi_c(t,0,x,u))-V(x)) },
\end{equation}
where $\phi_c$ is a transition map, corresponding to continuous part of the system \eqref{ImpSystem}, i.e. $\phi_c(t,0,x,u)$ is a state of the system \eqref{ImpSystem} at time $t$, if the state at time $t_0:=0$ was $x$, input $u$ was applied and $T=\emptyset$.

If $D=X$ and \eqref{ISS_LF_Imp} holds for all $x \in X$ and $\xi \in U$, then $V$ is called \textit{ISS-Lyapunov function}.
If in addition
\begin{equation}
\label{Lyap_ExpFunk}
\varphi(s) = c s \mbox{ and } \alpha(s) = e^{-d} s
\end{equation}
for some $c,d \in \R$, then $V$ is called \textit{exponential ISS-Lyapunov function with rate coefficients} $c,d$.
\end{Def}

If both $c$ and $d$ are positive, then $V$ decreases along the
continuous flow as well as at each jump. In this case an impulsive system is ISS w.r.t. to all impulse time sequences. If both $c$ and $d$ are negative, then we cannot guarantee ISS of \eqref{ImpSystem} w.r.t. any impulse time sequence. We are interested in the case of
$cd < 0$, where stability properties depend on $T$. In this case
input-to-state stability can be guaranteed under certain restrictions on $T$.
Intuitively, the increase of either $c$ or $d$ leads to less severe
restrictions on $T$.


\begin{remark}
We would like to emphasize that $\phi_c(\cdot,0,x,u)$ does not depend on $T$. Therefore $\dot{V}_u(x)$ and an ISS Lyapunov function $V$ do not depend on the impulse time sequence. This implies, that the existence of an ISS-Lyapunov function doesn't imply in general the input-to-state stability of an impulsive system and additional restrictions on the set of impulse time sequences have to be imposed.
\end{remark}

Our definition of ISS-Lyapunov function is given in the implication form.
The next proposition shows an equivalent way to introduce ISS Lyapunov function, which is frequently used in the literature on hybrid systems, see e.g. \cite{NeT08}. We will use it for the formulation of the small-gain theorem in Section~\ref{ISS_Kopplungen}.
It is a counterpart of \cite[Proposition 2.2.19]{Kos11} where an analogous result for hybrid systems has been shown.

Recall that a function $g: X \times U \to X$ is called locally bounded, if for each $\rho>0$ there exists $K=K(\rho)>0$, so that $\sup_{x \in X: \|x\|_X \leq \rho,\ u \in U: \|u\|_U \leq \rho} \|g(x,u)\|_X \leq K$.

\begin{proposition}
\label{Prop:LyapEquiv}
Let for a continuous function $V:X \to \R_+$ there exist $\psi_1,\psi_2 \in \Kinf$, such that \eqref{LF_ErsteEigenschaft} holds
and $\exists \gamma \in \Kinf$, $\alpha \in \PD$ and continuous function $\varphi:\R_+ \to \R$, $\varphi(0)=0$  such that
for all $\xi \in U$ and all $u \in U_c$ with $u(0)=\xi$ it holds
\begin{equation}
\label{ISS_LF_Imp_2} V(x)\geq\gamma(\|\xi\|_U) \quad \Rightarrow \quad
\dot{V}_u(x) \leq - \varphi(V(x))
\end{equation}
and $\forall x \in X, \xi \in U$ it holds
\begin{equation}
\label{ISS_LF_Imp_3}
V(g(x,\xi))\leq  \max\{ \alpha(V(x)), \gamma(\|\xi\|_U)\}.
\end{equation}
Then $V$ is an ISS Lyapunov function.
If $g$ is locally bounded, then also the converse implication holds.
\end{proposition}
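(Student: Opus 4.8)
The plan is to prove the two implications separately, observing first that in both directions the \emph{dissipation} parts of the two definitions transfer directly: one keeps the same function $\varphi$ and merely identifies the gain thresholds $\chi$ (from Definition~\ref{ISS_Imp_LF}) and $\gamma$ (from the Proposition). Indeed, if $\gamma \geq \chi$ pointwise, then $V(x) \geq \gamma(\|\xi\|_U)$ implies $V(x) \geq \chi(\|\xi\|_U)$, so the decay estimate $\dot V_u(x) \leq -\varphi(V(x))$ carries over unchanged. Hence all the genuine work lies in converting the jump condition between its implication form in \eqref{ISS_LF_Imp} and its $\max$-form \eqref{ISS_LF_Imp_3}.

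For the forward direction I would set $\chi := \gamma$ and $\tilde\alpha(r) := \max\{\alpha(r), r\}$, which belongs to $\PD$. If $V(x) \geq \chi(\|\xi\|_U) = \gamma(\|\xi\|_U)$, then $\gamma(\|\xi\|_U) \leq V(x)$, and therefore \eqref{ISS_LF_Imp_3} yields
\[
V(g(x,\xi)) \leq \max\{\alpha(V(x)), \gamma(\|\xi\|_U)\} \leq \max\{\alpha(V(x)), V(x)\} = \tilde\alpha(V(x)),
\]
which is exactly the jump inequality required in \eqref{ISS_LF_Imp}. Together with the dissipation remark this shows $V$ is an ISS-Lyapunov function in the sense of Definition~\ref{ISS_Imp_LF}.

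The converse is where local boundedness of $g$ becomes essential, and I expect it to be the main obstacle. Given the implication form, I would fix a threshold $\gamma \in \Kinf$ with $\gamma \geq \chi$ and, for every pair $(x,\xi)$, split according to whether $V(x) \geq \chi(\|\xi\|_U)$ or not. In the first case \eqref{ISS_LF_Imp} gives directly $V(g(x,\xi)) \leq \alpha(V(x))$, so the $\max$-form holds with $\tilde\alpha := \alpha$. In the second case $V(x) < \chi(\|\xi\|_U)$ forces, via \eqref{LF_ErsteEigenschaft}, the a priori bound $\|x\|_X \leq \psi_1^{-1}(\chi(\|\xi\|_U))$, so that both arguments of $g$ are controlled by $R(\|\xi\|_U)$ with $R(r) := \max\{r, \psi_1^{-1}(\chi(r))\} \in \Kinf$. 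Writing $\tilde K(\rho) := \sup_{\|x\|_X \leq \rho,\ \|\xi\|_U \leq \rho} \|g(x,\xi)\|_X$, which is finite and nondecreasing precisely because $g$ is locally bounded, one obtains $V(g(x,\xi)) \leq \psi_2(\|g(x,\xi)\|_X) \leq \psi_2(\tilde K(R(\|\xi\|_U)))$.

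It remains to absorb the right-hand side into $\gamma(\|\xi\|_U)$. Since $g(0,0)=0$ and $g$ is continuous at the origin, $\tilde K(\rho)\to 0$ as $\rho\to 0^+$, so the nondecreasing function $r \mapsto \psi_2(\tilde K(R(r)))$ is finite, vanishes at $0$, and tends to $0$ there; a standard majorization then furnishes $\gamma \in \Kinf$ dominating both this function and $\chi$. With this $\gamma$ the second case gives $V(g(x,\xi)) \leq \gamma(\|\xi\|_U) \leq \max\{\alpha(V(x)), \gamma(\|\xi\|_U)\}$, establishing \eqref{ISS_LF_Imp_3}, while $\gamma \geq \chi$ secures \eqref{ISS_LF_Imp_2} with the same $\varphi$. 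The delicate point throughout is that without local boundedness the quantity $\tilde K(R(\|\xi\|_U))$, and hence $V(g(x,\xi))$ in the regime $V(x) < \chi(\|\xi\|_U)$, could be infinite, so that no $\Kinf$ gain could dominate it; this is exactly why the converse requires the extra hypothesis.
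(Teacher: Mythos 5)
Your proof is correct, and in the substantive direction (the converse, where local boundedness enters) it is essentially the paper's own argument: the paper defines $S(r):=\{x\in X:\|x\|_X\le\psi_1^{-1}\circ\chi(r)\}$ and $\omega(r):=\sup_{\|\xi\|_U\le r,\ x\in S(r)}\psi_2(\|g(x,\xi)\|_X)$, notes that this supremum is finite by local boundedness, and then picks $\gamma\in\K$ majorizing $\max\{\omega,\chi\}$ --- precisely your $\tilde K$/$R$ construction. One caveat you share with the paper: for a continuous majorant with $\gamma(0)=0$ to exist one needs $\omega(r)\to 0$ as $r\to 0^+$, which does not follow from local boundedness and $g(0,0)=0$ alone; the paper uses this silently, while you invoke continuity of $g$ at the origin explicitly. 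Making the assumption visible is a virtue, but note it is not among the stated hypotheses, so on this point your proof and the paper's stand or fall together.

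Where you genuinely deviate is the forward direction. The paper absorbs the term $\gamma(\|\xi\|_U)$ by choosing an invertible majorant $\rho\in\Kinf$ with $\rho(r)>\alpha(r)$ for $r>0$, keeping the jump estimate $V(g(x,\xi))\le\rho(V(x))$ and paying for it by enlarging the threshold to $\chi:=\max\{\gamma,\rho^{-1}\circ\gamma\}$. You instead keep $\chi:=\gamma$ and pay by enlarging the jump map to $\tilde\alpha:=\max\{\alpha,\mathrm{id}\}$. Both are valid, and yours is arguably simpler since it needs no invertible majorant of $\alpha$; the difference is which quantitative data survives. Your construction keeps the ISS gain as small as possible (advantageous when the proposition feeds into the small-gain machinery of Section~\ref{ISS_Kopplungen}), but your jump estimate is never below the identity, so the information that jumps are stabilizing ($\alpha<\mathrm{id}$) is discarded --- exactly the information the dwell-time integral $\int_a^{\alpha(a)}\frac{ds}{\varphi(s)}$ in Theorem~\ref{NonLin_DwellTime_Satz} exploits. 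The paper's construction keeps the jump map close to $\alpha$ at the price of a larger gain. For the qualitative statement of Proposition~\ref{Prop:LyapEquiv} either choice suffices.
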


\begin{proof}
"$\Rightarrow$" Pick any $\rho \in \Kinf$ such that
$\alpha(r)<\rho(r)$ for all $r>0$. Then for all $x \in X$ and $\xi
\in U$ from \eqref{ISS_LF_Imp_3} we have
\begin{equation*}
V(g(x,\xi))\leq  \max\{ \rho(V(x)), \gamma(\|\xi\|_U)\}.
\end{equation*}
Define $\chi:= \max\{ \gamma, \rho^{-1} \circ \gamma\} \in \Kinf$.
For all $x \in X$ and $\xi \in U$ such that $V(x) \geq \chi(\|\xi\|_U)$ it follows $\rho(V(x)) \geq \gamma(\|\xi\|_U)$ and hence
\[
V(g(x,\xi))\leq \rho(V(x)).
\]
Since $\chi(r) \geq \gamma(r)$ for all $r>0$, it is clear, that \eqref{ISS_LF_Imp} holds. Thus, $V$ is an ISS-Lyapunov function.

"$\Leftarrow$" 
Let $g$ be locally bounded and let $V$ be an ISS-Lyapunov function for a system \eqref{ImpSystem}.
Then $\exists \chi \in \K$ and $\alpha \in \PD$ such that for all
$x \in X$ and $\xi \in U$ from $V(x)>\chi(\|\xi\|_U)$ it follows $V(g(x,\xi))\leq\alpha(V(x))$.

Let $V(x) \leq \chi(\|\xi\|_U)$. Then $\|x\|_X \leq \psi_1^{-1} \circ \chi(\|\xi\|_U)$.
Define $S(r):=\{ x \in X: \|x\|_X \leq \psi_1^{-1} \circ \chi(r)\}$ and
$\omega(r):= \sup\limits_{\|\xi\|_U \leq r,\ x \in S(r)} \psi_2(\|g(x,\xi)\|_X)$.
This supremum exists since $g$ is locally bounded.
Clearly, $\omega$ is nondecreasing and $\omega(0)=\psi_2(\|g(0,0)\|_X)=0$.
Pick any $\gamma \in \K$: $\gamma \geq \max\{ \omega,\chi\}$. Then for all $x \in X$ and $\xi \in U$ inequality \eqref{ISS_LF_Imp_3} holds and for all $x:$ $\|x\|_X \geq \gamma(\|\xi\|_U)$ estimate \eqref{ISS_LF_Imp_2} holds.
\hfill  \end{proof}

Similarly one can prove the following proposition (which is not a consequence of Proposition~\ref{Prop:LyapEquiv}):
\begin{proposition}
\label{Prop:ExpLyapEquiv}
Let for a continuous function $V:X \to \R_+$ there exist $\psi_1,\psi_2 \in \Kinf$, such that \eqref{LF_ErsteEigenschaft} holds
and $\exists \gamma \in \Kinf$ and $c,d \in \R$ such that
for all $\xi \in U$ and all $u \in U_c$ with $u(0)=\xi$ it holds
\begin{equation*}
V(x)\geq\gamma(\|\xi\|_U)\quad\Rightarrow\quad\dot{V}_u(x) \leq - c
V(x)
\end{equation*}
and $\forall x \in X, \xi \in U$ it holds
\begin{equation*}
V(g(x,\xi))\leq  \max\{ e^{-d} V(x), \gamma(\|\xi\|_U)\}.
\end{equation*}
Then $V$ is an exponential ISS-Lyapunov function.
If $g$ is locally bounded, then also the converse implication holds.
\end{proposition}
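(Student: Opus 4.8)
The plan is to mirror the proof of Proposition~\ref{Prop:LyapEquiv} almost verbatim, exploiting the fact that here the jump comparison function $\alpha(s)=e^{-d}s$ is linear, hence already a $\Kinf$-function with explicit inverse $e^{d}s$. For the direction "$\Rightarrow$" I would start from the max-type jump estimate $V(g(x,\xi))\leq \max\{e^{-d}V(x),\gamma(\|\xi\|_U)\}$ and remove the maximum under a suitable threshold on $V(x)$. Concretely, set $\chi:=\max\{\gamma,\,e^{d}\gamma\}\in\Kinf$. Whenever $V(x)\geq\chi(\|\xi\|_U)$ one has both $V(x)\geq\gamma(\|\xi\|_U)$, which yields the decay bound $\dot V_u(x)\leq -cV(x)$ directly from the hypothesis, and $e^{-d}V(x)\geq\gamma(\|\xi\|_U)$, so the maximum collapses to $e^{-d}V(x)$ and the jump estimate becomes $V(g(x,\xi))\leq e^{-d}V(x)$. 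With $\varphi(s)=cs$ and $\alpha(s)=e^{-d}s$ this is exactly the implication \eqref{ISS_LF_Imp} required in Definition~\ref{ISS_Imp_LF} together with the exponential form \eqref{Lyap_ExpFunk}, so $V$ is an exponential ISS-Lyapunov function. Unlike the general positive-definite $\alpha$ of Proposition~\ref{Prop:LyapEquiv}, no auxiliary majorant $\rho>\alpha$ is needed since $\alpha$ is already invertible; this is the only genuine simplification.

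For the converse "$\Leftarrow$", assuming $g$ locally bounded and $V$ an exponential ISS-Lyapunov function, I would carry over the construction from Proposition~\ref{Prop:LyapEquiv} unchanged. The decay part is immediate: the hypothesis supplies $\chi\in\K$ with $V(x)>\chi(\|\xi\|_U)\Rightarrow \dot V_u(x)\leq -cV(x)$, and enlarging $\chi$ to the eventual gain keeps this implication valid. The work is in producing a $\Kinf$-gain for the jump map on the region where $V(x)\leq\chi(\|\xi\|_U)$. There \eqref{LF_ErsteEigenschaft} gives $\|x\|_X\leq\psi_1^{-1}\circ\chi(\|\xi\|_U)$, so $x$ lies in the bounded set $S(r):=\{x:\|x\|_X\leq\psi_1^{-1}\circ\chi(r)\}$ with $r=\|\xi\|_U$. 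I then define $\omega(r):=\sup_{\|\xi\|_U\leq r,\,x\in S(r)}\psi_2(\|g(x,\xi)\|_X)$, which is finite by local boundedness of $g$, nondecreasing, and satisfies $\omega(0)=\psi_2(\|g(0,0)\|_X)=0$.

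Finally I would pick $\gamma\in\Kinf$ dominating $\max\{\omega,\chi\}$ (a $\Kinf$-majorant of a nondecreasing function vanishing at $0$ always exists) and verify the two required estimates by a case split. If $V(x)>\chi(\|\xi\|_U)$, the exponential ISS-Lyapunov property gives $V(g(x,\xi))\leq e^{-d}V(x)\leq\max\{e^{-d}V(x),\gamma(\|\xi\|_U)\}$; if instead $V(x)\leq\chi(\|\xi\|_U)$, then $x\in S(\|\xi\|_U)$ and $V(g(x,\xi))\leq\psi_2(\|g(x,\xi)\|_X)\leq\omega(\|\xi\|_U)\leq\gamma(\|\xi\|_U)$. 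In both cases the max-type jump estimate holds, and since $\gamma\geq\chi$ the decay implication holds for $V(x)\geq\gamma(\|\xi\|_U)$ as well. The main, and essentially the only nontrivial, obstacle is this converse construction of $\gamma$, where local boundedness of $g$ is precisely what guarantees finiteness of $\omega$; the forward direction is a routine collapse of the maximum thanks to the linearity of $\alpha$.
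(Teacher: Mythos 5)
Your proposal is correct and follows exactly the route the paper intends: the paper gives no separate proof of Proposition~\ref{Prop:ExpLyapEquiv}, stating only that it is proved ``similarly'' to Proposition~\ref{Prop:LyapEquiv}, and your argument is precisely that adaptation --- replacing the auxiliary majorant $\rho$ by the already-invertible linear map $s \mapsto e^{-d}s$ in the forward direction, and reusing the locally-bounded-$g$ construction of $\omega$ and $\gamma$ verbatim for the converse. The only caveat (inherited from, not introduced by, the paper's own proof of Proposition~\ref{Prop:LyapEquiv}) is the tacit assumption that the nondecreasing function $\omega$ tends to $0$ at $0$, which is what makes a $\K_\infty$-majorant exist.
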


%

Now we provide a combination of dwell-time and Lyapunov-type
conditions that guarantees that system \eqref{ImpSystem} is ISS. In
contrast to continuous systems the existence of an ISS-Lyapunov
function for \eqref{ImpSystem} does not automatically imply ISS of
the system with respect to all impulse time sequences. In order to
find the set of impulse time sequences for which the system is ISS
we use the FDT condition \eqref{Gen_Dwell-Time} from \cite{SaP95},
where it was used to guarantee global asymptotic stability of
finite-dimensional impulsive systems without inputs.

For $\theta>0$ define the set $S_{\theta}:=\{\{t_i\}_1^{\infty}
\subset [t_0,\infty) \ : \ t_{i+1}-t_i \geq \theta,\ \forall i \in
\N \}$, consisting of impulse time sequences with distance between
impulse times not less than $\theta$. \index{dwell-time
condition!nonlinear fixed} \index{nonlinear FDT}
\begin{Satz}
\label{NonLin_DwellTime_Satz} Let $V$ be an ISS-Lyapunov function
for \eqref{ImpSystem} and $\varphi,\alpha$ be as in the Definition
\ref{ISS_Imp_LF} and $\varphi \in \PD$. Let for some
$\theta,\delta>0$ and all $a>0$ it hold
\begin{equation}
\label{Gen_Dwell-Time}
\int_a^{\alpha(a)} {\frac{ds}{\varphi(s)}} \leq \theta - \delta.
\end{equation}
Then \eqref{ImpSystem} is ISS for all impulse time sequences $T \in S_{\theta}$.
\end{Satz}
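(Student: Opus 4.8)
The plan is to track the scalar quantity $y(t):=V(\phi(t,t_0,x,u))$ and to show it obeys an ISS-type estimate $y(t)\le\max\{\beta(V(x),t-t_0),\ \gamma_u\}$ with $\beta\in\KL$ and $\gamma_u$ a $\Kinf$-function of $\|u\|_{U_c}$; combining this with $\psi_1(\|x\|_X)\le V(x)\le\psi_2(\|x\|_X)$ from \eqref{LF_ErsteEigenschaft} then yields ISS, and uniformity over $S_\theta$ is automatic because every bound produced depends only on $\theta,\delta,\varphi,\alpha,\chi,\psi_1,\psi_2$ and not on the particular sequence. Write $\ell:=\chi(\|u\|_{U_c})$. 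Since $\|u^-(t_i)\|_U\le\|u\|_{U_c}$, whenever $y(t_i^-)\ge\ell$ the premise $V(x)\ge\chi(\|u^-(t_i)\|_U)$ of \eqref{ISS_LF_Imp} is met, so on the region $\{y\ge\ell\}$ the Lyapunov data give $\dot y\le-\varphi(y)$ along the continuous flow and $y(t_i)\le\alpha(y(t_i^-))$ across each jump. It is convenient to replace $\alpha\in\PD$ by the nondecreasing majorant $\hat\alpha(s):=\sup_{0\le r\le s}\alpha(r)$, which is continuous with $\hat\alpha(0)=0$.

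The core of the argument is a discrete energy estimate at the jump instants. Fix the primitive $G(r):=\int_{r_0}^r ds/\varphi(s)$ for some reference $r_0>0$; since $\varphi\in\PD$, $G$ is strictly increasing on $(0,\infty)$. Set $a_i:=y(t_i^-)$, $b_i:=y(t_i)$, and suppose $a_j\ge\ell$ for all $j\le i$. Along the flow on $[t_i,t_{i+1})$ one has $\tfrac{d}{dt}G(y)=\dot y/\varphi(y)\le-1$, so integrating over an interval of length $t_{i+1}-t_i\ge\theta$ gives $G(b_i)-G(a_{i+1})\ge\theta$. The jump bound $b_i\le\alpha(a_i)$ together with the dwell-time condition \eqref{Gen_Dwell-Time} at $a=a_i$ gives $G(b_i)-G(a_i)\le\int_{a_i}^{\alpha(a_i)}ds/\varphi(s)\le\theta-\delta$. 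Subtracting the two inequalities yields
\[
G(a_i)-G(a_{i+1})\ \ge\ \delta\qquad(i\ge1),
\]
so the pre-jump values lose a fixed amount of $G$-energy per jump; since also $a_1\le V(x)$, they reach the level $\{y\le\ell\}$ after at most $N:=\lceil(G(V(x))-G(\ell))/\delta\rceil+1$ jumps. This is exactly where the margin $\delta>0$ in \eqref{Gen_Dwell-Time} is used.

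It then remains to assemble a genuine $\KL$ estimate. For the boundedness (global-stability) part I would use $b_i\le\hat\alpha(a_i)\le\hat\alpha(V(x))$ together with monotonicity of $y$ between jumps to bound $\sup_t y(t)\le\max\{V(x),\hat\alpha(V(x)),\hat\alpha(\ell),\ell,\ \text{input gain}\}$, each term being a $\Kinf$-function of $V(x)$ or of $\|u\|_{U_c}$. For the decay part, since each of the (at most $N$) jumps is separated by at least $\theta$ and contributes a fixed $G$-decrement $\delta$, while the flow strictly decreases $y$ in between, a standard comparison-function construction turns ``$G(a_i)\le G(V(x))-(i-1)\delta$ on intervals of length $\ge\theta$'' into a $\KL$ bound $\beta(V(x),t-t_0)$ valid until $y$ enters the level $\ell$; once there, the flow keeps $y\le\ell$ (as $\dot y\le-\varphi(\ell)<0$ at $y=\ell$) and the jumps cannot push it above a $\Kinf$-function of $\|u\|_{U_c}$, giving the input term.

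The step I expect to be the main obstacle is the correct treatment of the input near the threshold $\ell$, specifically bounding $y$ immediately after a jump that starts from below $\ell$: the implication form \eqref{ISS_LF_Imp} says nothing about $V(g(x,\xi))$ when $V(x)<\chi(\|\xi\|_U)$. I would resolve this by passing through the max-form of the Lyapunov estimate furnished by Proposition~\ref{Prop:LyapEquiv}, so that $V(g(x,\xi))\le\max\{\alpha(V(x)),\gamma(\|\xi\|_U)\}$ holds for \emph{all} $x,\xi$; this uses local boundedness of $g$, guaranteed near the origin by continuity of $g$ and $V$. The remaining work, namely manufacturing the explicit $\KL$ function and verifying the comparison principle for the impulsive scalar differential inequality, is routine given the $G$-decrement estimate above.
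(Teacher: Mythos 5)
Your proposal follows the same route as the paper's proof: your $G$ is the paper's $F$ (defined there as $F(q)=\int_r^q ds/\varphi(s)$), your per-jump decrement $G(a_i)-G(a_{i+1})\ge\delta$ is exactly the paper's estimate \eqref{Lyap_Abschaetzung} (the paper books it at post-jump values, you at pre-jump values, which is immaterial), and your threshold $\ell=\chi(\|u\|_{U_c})$ together with an enlarged post-jump bound is the paper's pair of sets $I_1\subseteq I_2$. Two caveats about what you dismiss as routine: the $\KL$-assembly is where the paper works hardest (one must respect that $F^{-1}$ exists only on the range of $F$, and the convergence $\tilde\beta(r,\cdot)\to 0$ is proved by a mean-value/contradiction argument), and your jump count $N=\lceil(G(V(x))-G(\ell))/\delta\rceil+1$ degenerates when $u\equiv 0$, since then $\ell=0$ and $G(0^+)$ may equal $-\infty$; the decay to zero must be obtained asymptotically, not after finitely many jumps.

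The genuine gap is in the step you yourself single out as the main obstacle. To control jumps originating below the threshold you invoke Proposition~\ref{Prop:LyapEquiv} so as to have the max-form estimate \eqref{ISS_LF_Imp_3} for \emph{all} $(x,\xi)$, and that proposition requires $g$ to be locally bounded --- a hypothesis that Theorem~\ref{NonLin_DwellTime_Satz} does not contain. Your justification that local boundedness is ``guaranteed near the origin by continuity of $g$ and $V$'' fails twice over: the paper never assumes $g$ continuous (only existence and uniqueness of solutions), and even a continuous $g$ on a Banach space need not be bounded on closed balls, because closed balls are not compact in infinite dimensions --- which is precisely why the paper makes local boundedness an explicit \emph{extra} hypothesis in Proposition~\ref{Prop:LyapEquiv} rather than deriving it. As written, your argument therefore proves a different (weaker) statement: the theorem under the additional assumption that $g$ is locally bounded. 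For contrast, the paper stays inside the implication form: it defines $\tilde\alpha(r)=\max\{\max_{0\le s\le\chi(r)}\alpha(s),\chi(r)\}$ and $I_2=\{x\in X: V(x)\le\tilde\alpha(\|u\|_{U_c})\}$ and argues that a jump from $I_1$ lands in $I_2$, which \eqref{ISS_LF_Imp} justifies when the pre-jump state satisfies $V(x^-(t_k))\ge\chi(\|u^-(t_k)\|_U)$; for pre-jump values below $\chi(\|u^-(t_k)\|_U)$ the implication form gives no information at all, so the difficulty you isolated is real (it is also the tersest point of the paper's own proof), but the repair must either be carried out within the implication form or the boundedness hypothesis must be added openly to the statement --- importing it silently through Proposition~\ref{Prop:LyapEquiv} is where your proof breaks.
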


\begin{proof}
Fix arbitrary $u \in U_c$, $\phi_0 \in X$
and choose the sequence of impulse times $T=\{t_i\}_{i=1}^{\infty}$,
$T \in S_{\theta}$. Our aim is to prove ISS of the system
\eqref{ImpSystem} w.r.t. impulse time sequence $T$ by a direct
construction of the functions $\beta$ and $\gamma$ from Definition~\ref{ISS_One_System}.

For the sake of brevity we denote $x(\cdot)=\phi(\cdot,t_0,\phi_0,u)$ and $y(\cdot):=V(x(\cdot))$.

At first assume that $u \equiv 0$. We are going to bound trajectory from above by a function $\beta \in \KL$.

Since  $u \equiv 0$ the following inequalities hold
\begin{equation}
\label{LyapDiffIneq}
 \dot{y}(t) \leq -\varphi (y(t)), \quad t \notin T,
\end{equation}
\begin{equation}
\label{LyapSprungIneq}
y(t) \leq \alpha(y^-(t)), \quad t \in T.
\end{equation}

Take an arbitrary pair $t_i,\ t_{i+1} \in T$.
There are two possibilities: 
either $y(t)>0$ for all $t \in [t_i,t_{i+1})$ or there exists certain time $\hat{t} \in [t_i,t_{i+1})$: $y(\hat{t})=0$ and, since 
$x=0$ is an equilibrium point of the system \eqref{ImpSystem}, $y(t)=0$ for all $t \geq \hat{t}$.

Let us consider the first case. Integrating \eqref{LyapDiffIneq} we obtain
\begin{equation}
\label{HilfUngl_1} 
\int_{t_i}^{t} \frac{dy(\tau)}{\varphi(y(\tau))} \leq - (t-t_{i}), \quad t \in (t_{i},t_{i+1}).
\end{equation}
Fix any $r>0$ and define
\[
F(q):=\int_{r}^{q} \frac{ds}{\varphi(s)}, \quad \forall q>0.
\]
Note that $F:(0,\infty) \to \R$ is a continuous strictly increasing function. Thus, it is invertible on $(0,\infty)$ and $F^{-1}:\R \to (0,\infty)$ is also an increasing function.

Changing variables in \eqref{HilfUngl_1} (which is possible since $y$ is bijective on $(t_i,t_{i+1})$), we can rewrite \eqref{HilfUngl_1} as
\begin{equation}
\label{HilfUngl_2}
F(y(t)) - F(y(t_i) ) \leq - (t-t_{i}).
\end{equation}
Consequently, for $t \in [t_i,t_{i+1})$ it holds
\begin{equation}
\label{V_Abschaetzung}
y(t) \leq F^{-1}\left( F(y(t_i)) - (t-t_{i}) \right).
\end{equation}
Taking in \eqref{HilfUngl_2} a limit $t \to t_{i+1}$ and recalling that $t_{i+1}-t_i \geq \theta$, we obtain
\begin{equation}
\label{HilfUngl_3}
F(y^-(t_{i+1})) - F(y(t_i)) \leq - \theta.
\end{equation}
Using that $y(t_{i+1}) \leq \alpha(y^-(t_{i+1}))$, we obtain the estimate
\[
F(y(t_{i+1})) - F(y(t_i)) \leq \left( F(\alpha(y^-(t_{i+1})))- F(y^-(t_{i+1})) \right)+ \left( F(y^-(t_{i+1})) - F(y(t_i)) \right).
\]
By \eqref{Gen_Dwell-Time} and \eqref{HilfUngl_3} we obtain
\[
F(y(t_{i+1})) - F(y(t_i)) \leq (\theta - \delta) - \theta = - \delta.
\]
From this inequality we have
\begin{equation}
\label{Lyap_Abschaetzung}
y(t_{i+1}) \leq F^{-1} ( F(y(t_i))- \delta ).
\end{equation}
In particular, $y(t) < y(t_i)$, $t \in (t_i,t_{i+1}]$.

From \eqref{Lyap_Abschaetzung} we obtain 
\begin{equation}
\label{Lyap_Abschaetzung_Allgemein}
y(t_{i+1}) \leq F^{-1} ( F(F^{-1} ( F(y(t_{i-1}))- \delta ))- \delta ) = F^{-1} ( F(y(t_{i-1}))- 2 \delta) \leq F^{-1} ( F(y(t_{1}))- i \delta).
\end{equation}

The estimate \eqref{Lyap_Abschaetzung_Allgemein} is valid for all $i$: $F(y(t_{1}))- i \delta \geq \lim_{q \to +0}F(q)$.
Let us denote the maximum of such $i$ by $\hat{i}$ (we set $\hat{i}:=\infty$ if such maximum doesn't exist).

Let us construct a function $\tilde{\beta}:\R_+ \times \R_+ \to \R_+$, which provides a bound for a function $y$. Define 
\[
\tilde{\beta} (r,t_1-t_0) = \max\{y(t_1),\alpha(y(t_1))\},\quad r \geq 0,
\]
where $y(\cdot)$ is a solution of \eqref{LyapDiffIneq}, corresponding to initial condition $y(t_0)=r$.

For all $1 \leq i \leq \hat{i}$ define 
\[
\tilde{\beta} (r,t_{i+1}-t_0) := F^{-1} ( F(\tilde{\beta} (r,t_1-t_0))- i \delta), \quad r \geq 0.
\]
For any $r > 0$, for all $i \leq \hat{i}$ define $\tilde{\beta} (r,\cdot)$ on $(t_{i-1}-t_{i},t_{i}-t_{i+1})$ as an arbitrary continuous decreasing function, which lies above every solution $y(\cdot)$ of \eqref{LyapDiffIneq} with \eqref{LyapSprungIneq}, corresponding to initial condition $y(t_0)=r$.

If $\hat{i}$ is finite, then define $\tilde{\beta} (r,\cdot)$ on $[t_{\hat{i}}-t_0,\infty)$ as a continuous decreasing to 0 function.

By construction, for all $t$ it holds that
\[
y(t) \leq \tilde{\beta} (y_0,t-t_{0}),
\]
where $\tilde{\beta}:\R_+ \times \R_+ \to \R_+$ is continuous w.r.t. the second argument, $\tilde{\beta}(0,t) \equiv 0$ for all $t \geq 0$ and $\tilde{\beta}(a,\cdot)$ is decreasing for all $a>0$. 
We are going to prove, that for all $r \geq 0$ it holds $\tilde{\beta}(r,t) \to 0$ as soon as $t \to \infty$. If $\hat{i} < \infty$, then it follows from the construction. Thus, let $\hat{i} = \infty$.

To show the above claim it is enough to prove, that for all $r>0$ it holds $z_r(t_i) = \tilde{\beta}(r,t_i-t_0) \to 0$, $i \to \infty$.

Let it be false, then due to monotonicity of $z_r$ for some $r{>}0$ $\exists \lim\limits_{i \to \infty} z_r(t_i){=} b_r {>} 0$.

Define $c:=\min_{b \leq s \leq z_r(0)} \varphi(s)$ and observe by the middle-value theorem that
\begin{eqnarray*}
\delta  \leq  F(z_r(t_{i})) - F(z_r(t_{i+1})) = \int_{z_r(t_{i+1})}^{z_r(t_{i})} \frac{ds}{\varphi(s)}  \leq  \frac{1}{c} (z_r(t_{i}) - z_r(t_{i+1})).
\end{eqnarray*}
Hence for all $i$ it holds
\[
z_r(t_{i}) - z_r(t_{i+1}) \geq c \delta,
\]
and the sequence $z_r(t_{i})$ does not converge to a positive limit. We obtained a contradiction to $b_r>0$, thus $z_r(t_{i}) \to 0$, $i \to \infty$. Thus, $\forall r>0$ $\tilde{\beta}(r,\cdot) \in \LL$.

For all $r,t \geq 0$ define $\beta_1(r,t):=\sup_{0 \leq h \leq r}\tilde{\beta}(h,t)$. Clearly, $\beta_1$ is nondecreasing w.r.t. the first argument and $\beta_1(r,t) \geq \tilde{\beta}(r,t)$ for all $r,t \geq 0$. 

Define now $\beta_2(r,t):= \frac{1}{r} \int_{r}^{2r} \beta_1(s,t) ds + re^{-t}$, $\forall r>0, t \geq 0$. Function $\beta_2 \in \KL$ and $\beta_2(r,t) \geq \beta_1(r,t)$, $\forall r,t \geq 0$. 
Hence if $u \equiv 0$ then it holds that
\[
V(x(t)) \leq \beta_2(V(\phi_0),t-t_0), \quad \forall t \geq 0.
\]
Now let $u$ be an arbitrary admissible input. 
Define
\begin{equation}
\label{I1_Definition}
I_1:=\{x \in X: V(x) \leq \chi(\|u\|_{U_c})\},
\end{equation}
For all $t:\ x(t) \notin I_1$ according to \eqref{ISS_LF_Imp} the estimates \eqref{LyapDiffIneq} and \eqref{LyapSprungIneq} hold and consequently
\[
V(x(t)) \leq \beta_2(V(\phi_0),t-t_0), \quad \forall t:\ x(t) \notin I_1.
\]
Let $t^*:=\inf \{t: x(t) \in I_1\}$.
From \eqref{LF_ErsteEigenschaft} we obtain
\begin{equation}
\label{Beta_Abschaetzung}
\|x(t)\|_X \leq \beta(\|\phi_0\|_X,t-t_0),\quad t \leq t^*,
\end{equation}
where $\beta(r,t)=\psi^{-1}_1( \beta_2(\psi_2(r),t))$.

Now we are going to estimate $\|x(t)\|_X$ for $t>t^*$. At first note that a trajectory can leave $I_1$ only by a jump.
If $\|u\|_{U_c}=0$, then $I_1$ is invariant under continuous dynamics, because $x \equiv 0$ is an equilibrium.
Let $\|u\|_{U_c}~>~0$ and let for some $t>t^*$ we have $x(t) \in \partial I_1$, i.e. $y(t)=\chi(\|u\|_{U_c})$.
Then according to the inequality \eqref{ISS_LF_Imp} it holds $\dot{y}(t) \leq -\varphi(y(t)) < 0$ and thus $y(\cdot)$ cannot leave $I_1$ at time $t$.

Define function $\tilde{\alpha}: \R_+ \to \R_+$ by
\[
\tilde{\alpha}(x):= \max\{ \max_{ 0 \leq s \leq \chi(x) } \alpha(s), \chi(x)\},\quad x \in \R_+.
\]
Also let us introduce the set
\[
I_2:=\{x \in X: V(x) \leq \tilde{\alpha}(\|u\|_{U_c})\} \supseteq I_1.
\]

We are going to prove, that $x(t) \in I_2$ for all $t \geq t^*$.

Now let for some $t_k \in T$, $t_k \geq t^*$ it hold $x(t_k) \notin I_1$ and $x(t) \in I_1$ for some $\eps >0$ and for all $t \in (t_k-\eps,t_k)$.
Then $x(t_k) \in I_2$ by construction of the set $I_2$.

But we have proved, that $y(t) < y(t_k)$ as long as $t>t_k$ and $x(t) \notin I_1$. Consequently, $x(t) \in I_2$ for all $t > t^*$.

Thus, for $t> t^*$ it holds
\[
V(x(t)) \leq \tilde{\alpha}(\|u\|_{U_c})
\]
which implies
\[
\|x(t)\|_X \leq \psi^{-1} (\tilde{\alpha}(\|u\|_{U_c})):= \tilde{\gamma} (\|u\|_{U_c}).
\]
Function $\tilde{\gamma}$ is positive definite and nondecreasing, thus, it may be always majorized by a $\K$-function $\gamma$.
Recalling \eqref{Beta_Abschaetzung} we obtain
\begin{equation}
\label{ISS_Abschaetzung}
\|x(t)\|_X \leq \beta(\|\phi_0\|_X,t-t_0) + \gamma(\|u\|_{U_c}), \quad \forall t \geq t_0.
\end{equation}

\hfill  \end{proof}

\begin{remark}
We haven't proved the uniform ISS of the system \eqref{ImpSystem}
w.r.t. $S_{\theta}$. Although the function $\gamma$ by construction
does not depend on the impulse time sequence $T \in S_{\theta}$, the
function $\beta$ does depend. But pick any periodic impulse
time sequence $T=\{t_1,\ldots,t_n,\ldots\} \in S_{\theta}$, that is
$t_{i+1}=t_i + d$ for some $d>0$. Then from the construction of the
function $\beta$ it is clear that \eqref{ImpSystem} is uniformly ISS
over the class $W=\{T_i, i\geq 1\}$, where
$T_i=\{t_i,\ldots,t_n,\ldots\}$.
\end{remark}

\begin{remark}
If the discrete dynamics does not destabilize the system, i.e.
$\alpha(a) \leq a$ for all $a \neq 0$, then the integral on the
right hand side of \eqref{Gen_Dwell-Time} is non-positive for all $a
\neq 0$, and the dwell-time condition \eqref{Gen_Dwell-Time} is
satisfied for arbitrarily small $\theta>0$, that is the system is ISS
for all impulse time sequences without finite  accumulation points.
\end{remark}

We illustrate the application of our theorem on the following toy example.
\begin{example}
Let $T$ be an impulse time sequence. Consider the system $\Sigma$, defined by
\begin{eqnarray}
\label{Beisp_NichtLin}
\left\{
\begin{array}{rl}
\dot{x} =& -x^3 + u ,\ t \notin T \\
x(t) = & x^-(t)+ (x^-(t))^3 + u^-(t),\ t \in T.
\end{array}
\right.
\end{eqnarray}
Consider a function $V:\R \to \R_+$, defined by $V(x)=|x|$.
We are going to prove, that $V$ is an ISS Lyapunov function of the system \eqref{Beisp_NichtLin}.\\
The Lyapunov gain $\chi$ we choose by $\chi(r)=\left( \frac{r}{a}\right)^{\tfrac{1}{3}}$, $r \in \R_+$, for some $a \in (0,1)$. \\
Condition $|x| \geq \chi(|u|)$ implies
\begin{eqnarray*}
\dot{V}(x) & \leq& -(1-a) (V(x))^3, \\
V(g(x,u)) &\leq & V(x)+(1+a)(V(x))^3.
\end{eqnarray*}
Let us compute the integral on the left hand side of \eqref{Gen_Dwell-Time}:
\begin{eqnarray*}
I(y,a) = \int_y^{y+(1+a)y^3} \frac{dx}{(1-a)x^3} = \frac{1+a}{2(1-a)} \frac{2+(1+a)y^2}{(1+(1+a)y^2)^2} \leq \frac{1+a}{(1-a)}.
\end{eqnarray*}
For every $\eps>0$ there exist $a_{\eps}$ such that $I(y,a_{\eps}) \leq 1 + 2\eps$.

Thus, for arbitrary $\eps>0$ we can choose $\theta:=1+\eps$. Note,
that the smaller $\theta$ we take, the larger is the gain. This
demonstrates the trade-off between the size of gains and the density
of allowable impulse times. This dependence plays an important role
in the application of small-gain theorems. See
Section~\ref{DT_SGC_Relation} for details.
\end{example}

A counterpart of Theorem \ref{NonLin_DwellTime_Satz} can be proved also for the GS property.
\begin{Satz}
\label{GS_Satz}
Let all the assumptions of Theorem~\ref{NonLin_DwellTime_Satz} hold with
$\delta:=0$. Then the system \eqref{ImpSystem} is globally stable uniformly over $S_{\theta}$.
\end{Satz}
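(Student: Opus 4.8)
The plan is to re-run the proof of Theorem~\ref{NonLin_DwellTime_Satz} almost verbatim, setting $\delta=0$ throughout, and then to track which conclusions survive this degradation. As there, I would fix $u\in U_c$, $\phi_0\in X$ and $T=\{t_i\}\in S_\theta$, write $x(\cdot)=\phi(\cdot,t_0,\phi_0,u)$, $y(\cdot)=V(x(\cdot))$, $y_0:=y(t_0)=V(\phi_0)$, and first treat $u\equiv0$. The inequalities \eqref{LyapDiffIneq}--\eqref{LyapSprungIneq}, the auxiliary function $F$, and the entire chain leading to \eqref{HilfUngl_3} are insensitive to the value of $\delta$. The only change is in the step producing \eqref{Lyap_Abschaetzung}: combining \eqref{HilfUngl_3} with the dwell-time bound \eqref{Gen_Dwell-Time}, now reading $\int_a^{\alpha(a)}\frac{ds}{\varphi(s)}\le\theta$, gives $F(y(t_{i+1}))-F(y(t_i))\le\theta-\theta=0$, hence $y(t_{i+1})\le y(t_i)$. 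Thus the sequence $\left(y(t_i)\right)_{i\ge1}$ is merely \emph{nonincreasing} rather than strictly decreasing, which is precisely why one should expect only global stability and not the $\KL$-decay of Theorem~\ref{NonLin_DwellTime_Satz}.

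Next I would build $\xi$. For $u\equiv0$, continuous decay on each $[t_i,t_{i+1})$ together with nonincreasingness yields, for every $t\ge t_0$, the bound $y(t)\le\max\{y_0,\ \sup_{0\le s\le y_0}\alpha(s)\}=:\sigma_0(y_0)$: indeed $y(t)\le y_0$ on $[t_0,t_1)$, the first jump raises the value to at most $\sup_{0\le s\le y_0}\alpha(s)$ since $y^-(t_1)\le y_0$, and for $t\in[t_i,t_{i+1})$ with $i\ge1$ one has $y(t)\le y(t_i)\le y(t_1)$. As $\sigma_0$ is nondecreasing, positive definite and vanishes at $0$, I majorize it by some $\sigma\in\Kinf$ and set $\xi:=\psi_1^{-1}\circ\sigma\circ\psi_2\in\Kinf$; then \eqref{LF_ErsteEigenschaft} gives $\|x(t)\|_X\le\xi(\|\phi_0\|_X)$. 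Crucially $\xi$ is assembled only from $\psi_1,\psi_2,\alpha$ and is independent of $T$, which already delivers uniformity over $S_\theta$ for the initial-condition term.

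For general $u$ I would reuse the sets $I_1$ of \eqref{I1_Definition} and $I_2=\{x:V(x)\le\tilde\alpha(\|u\|_{U_c})\}$ from the proof of Theorem~\ref{NonLin_DwellTime_Satz}. On $\{t:x(t)\notin I_1\}$ the implication \eqref{ISS_LF_Imp} again makes \eqref{LyapDiffIneq}--\eqref{LyapSprungIneq} valid, so the $u\equiv0$ computation applies up to $t^*:=\inf\{t:x(t)\in I_1\}$ and yields $\|x(t)\|_X\le\xi(\|\phi_0\|_X)$ for $t\le t^*$. For $t>t^*$ the trapping argument carries over: the continuous flow cannot exit $I_1$ because on $\partial I_1$ one has $\dot y\le-\varphi(y)<0$, a jump leaving $I_1$ lands in $I_2$ by the construction of $\tilde\alpha$, and once in $I_2$ the trajectory remains there since $y$ strictly decreases along the flow and does not increase at jumps. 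Hence $V(x(t))\le\tilde\alpha(\|u\|_{U_c})$ for $t>t^*$, so $\|x(t)\|_X\le\psi_1^{-1}(\tilde\alpha(\|u\|_{U_c}))=:\tilde\gamma(\|u\|_{U_c})$; majorizing the nondecreasing positive-definite $\tilde\gamma$ by some $\gamma\in\Kinf$ and using $\max\{a,b\}\le a+b$ gives \eqref{GS_ImpSys}.

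The step I expect to be the main obstacle is exactly the loss of strict decrease at the jumps. In Theorem~\ref{NonLin_DwellTime_Satz} the strict inequality $y(t)<y(t_i)$ simultaneously drove the $\KL$-convergence and the claim that, once inside, the trajectory cannot re-emerge above $I_2$. With $\delta=0$ I have only the non-strict estimate $y(t_{i+1})\le y(t_i)$, so the invariance of $I_2$ must be re-argued from this together with the strict continuous decay between impulses, and I must verify that the single jump leaving $I_1$ cannot overshoot $I_2$ — which is precisely what the definition of $\tilde\alpha$, built from $\alpha$ over $[0,\chi(\|u\|_{U_c})]$, is designed to guarantee. Everything else is a routine repetition of the earlier proof with $\delta=0$, and uniformity over $S_\theta$ is automatic since $\xi$ and $\gamma$ depend only on $\psi_1,\psi_2,\alpha,\varphi,\chi$ and $\theta$, never on the particular sequence.
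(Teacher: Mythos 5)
Your proposal is correct and takes essentially the same route as the paper's own (very terse) proof: re-run the argument of Theorem~\ref{NonLin_DwellTime_Satz} with $\delta=0$ to conclude that the jump values $y(t_i)$ are nonincreasing, replace $\tilde{\beta}$ by a $\Kinf$ bound built from $r\mapsto\max\{r,\alpha(r)\}$ composed with $\psi_1^{-1},\psi_2$, and reuse the $I_1$/$I_2$ trapping argument for the input term, with uniformity following because $\xi$ and $\gamma$ do not depend on the sequence $T$. If anything, your version is slightly more careful than the paper's sketch, since you take $\sup_{0\le s\le y_0}\alpha(s)$ (needed because $\alpha\in\PD$ need not be monotone) and explicitly re-verify the invariance of $I_2$ under the now non-strict jump inequality.
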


\begin{proof}
The proof goes along the lines of the proof of the Theorem \ref{NonLin_DwellTime_Satz} up to the inequality \eqref{Lyap_Abschaetzung}, which holds with $\delta=0$.
Then instead of $\tilde{\beta}$ we introduce $\tilde{\xi} \in \Kinf$ by $\tilde{\xi}(r) = \max \{r,\alpha(r)\}$, and instead of estimate \eqref{Beta_Abschaetzung} we have
\begin{equation}
\label{Xi_Abschaetzung}
\|x(t)\|_X \leq \psi^{-1}_1( \tilde{\xi}(\psi_2(\|\phi_0\|_X))) := \xi(\|\phi_0\|_X).
\end{equation}
Thus, for all $t \geq t_0$ we obtain
\begin{equation}
\label{GS_Abschaetzung}
\|x(t)\|_X \leq \xi(\|\phi_0\|_X) + \gamma(\|u\|_{U_c}),
\end{equation}
Note, that the functions $\xi$ and $\gamma$ do not depend on  $t_0$
and on the sequence of impulse times $T$, which proves uniformity.
\hfill  \end{proof}

Now consider the case, when continuous dynamics destabilizes the system and the discrete one stabilizes it. We only sketch the proofs since they are similar to the proofs of Theorems 
\ref{NonLin_DwellTime_Satz} and \ref{GS_Satz}.

Define $\tilde{S}_{\theta}:=\{  \{t_i\}_1^{\infty} \subset [t_0,\infty) \ : \ t_{i+1}-t_i \leq \theta,\ \forall i \in \N \}$.
\begin{Satz}
\label{NonLin_DwellTime_Satz_2} 
Let $V$ be an ISS-Lyapunov function
for \eqref{ImpSystem} and $\varphi,\alpha$ are as in the Definition
\ref{ISS_Imp_LF} with $-\varphi \in \PD$. Let for some $\theta,
\delta >0$ and all $a>0$ it hold
\begin{equation}
\label{Gen_Dwell-Time_2}
\int_{\alpha(a)}^a {\frac{ds}{ - \varphi(s)}} \geq \theta + \delta.
\end{equation}
Then \eqref{ImpSystem} is ISS w.r.t. every sequence from $\tilde{S}_{\theta}$.
\end{Satz}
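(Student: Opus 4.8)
The plan is to mirror the proof of Theorem~\ref{NonLin_DwellTime_Satz} with the roles of the continuous flow and of the jumps interchanged. Abbreviate the positive-definite function $\mu := -\varphi \in \PD$; now the continuous dynamics may only \emph{increase} $V$, and only at the controlled rate $\dot y \leq \mu(y)$ valid while $V \geq \chi(\|u\|_{U_c})$, whereas \eqref{Gen_Dwell-Time_2} forces $\alpha(a)<a$, so that each jump strictly \emph{decreases} $V$. Fix $r>0$ and set $G(q):=\int_r^q \frac{ds}{-\varphi(s)}=\int_r^q \frac{ds}{\mu(s)}$, which, since $\mu>0$, is continuous and strictly increasing on $(0,\infty)$, hence invertible. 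First I would treat $u\equiv 0$: on an inter-jump interval $(t_i,t_{i+1})$ the bound $\dot y\leq\mu(y)$ integrates to $G(y(t))-G(y(t_i))\leq t-t_i$, so letting $t\to t_{i+1}^-$ and using $t_{i+1}-t_i\leq\theta$ gives $G(y^-(t_{i+1}))\leq G(y(t_i))+\theta$. Combining the jump estimate $y(t_{i+1})\leq\alpha(y^-(t_{i+1}))$ with $G(\alpha(a))\leq G(a)-(\theta+\delta)$ (which is \eqref{Gen_Dwell-Time_2} rewritten) yields the net contraction
\[
y(t_{i+1}) \leq G^{-1}\big(G(y(t_i)) - \delta\big),
\]
the exact analogue of \eqref{Lyap_Abschaetzung}. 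Iterating gives $y(t_{i+1})\leq G^{-1}(G(y(t_1))-i\delta)\to 0$, while inside each interval $y(t)\leq G^{-1}(G(y(t_i))+(t-t_i))$ controls the overshoot. From here the construction of a $\KL$-majorant $\tilde\beta$ and then of $\beta_1,\beta_2$ proceeds verbatim as in Theorem~\ref{NonLin_DwellTime_Satz}, the only change being that the supremum over an interval is now attained just before a jump rather than at its left endpoint.

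For general $u$ I set $I_1:=\{x:V(x)\leq\chi(\|u\|_{U_c})\}$. As long as $x(t)\notin I_1$ the implication \eqref{ISS_LF_Imp} supplies both estimates above, so the same $\beta$ bounds the trajectory up to the first entry time $t^*$ into $I_1$, giving \eqref{Beta_Abschaetzung}. The genuinely new point is the behaviour after $t^*$: in Theorem~\ref{NonLin_DwellTime_Satz} the flow kept the trajectory inside $I_1$ and it could escape only by a jump, whereas here the destabilising flow is precisely the escape mechanism. I would therefore enlarge the absorbing set to $I_2:=\{x:V(x)\leq G^{-1}(G(\chi(\|u\|_{U_c}))+\theta)\}$, the bound being the largest value $V$ can reach by growing at rate $\mu$ for one inter-jump span of length at most $\theta$ after crossing the threshold $\chi(\|u\|_{U_c})$. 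One checks that the threshold is crossed continuously and with the finite controlled rate $\mu(\chi(\|u\|_{U_c}))$, so the overshoot before the next jump cannot exceed the defining bound of $I_2$, and the ensuing jump returns the trajectory strictly below $\chi(\|u\|_{U_c})$ by the net-contraction estimate; hence $I_2$ is forward invariant for $t\geq t^*$. Reading off $\|x(t)\|_X\leq\psi_1^{-1}\big(G^{-1}(G(\chi(\|u\|_{U_c}))+\theta)\big)=:\gamma(\|u\|_{U_c})$ and combining with \eqref{Beta_Abschaetzung} yields the ISS estimate, the resulting nondecreasing positive-definite gain being majorized by a $\K$-function exactly as before.

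The step I expect to be the main obstacle is precisely the forward invariance of $I_2$: I must rule out that a long succession of controlled growths accumulates faster than the jumps can compensate. This rests on the quantitative fact that over any single interval of length at most $\theta$ the flow adds at most $\theta$ to $G(y)$, while the ensuing jump subtracts at least $\theta+\delta$, leaving a uniform net loss of $\delta$ in the $G$-scale. Care is also needed at the threshold, since the flow estimate $\dot y\leq\mu(y)$ holds only for $V\geq\chi(\|u\|_{U_c})$; one must argue that $V$ leaves $I_1$ continuously and with bounded rate, rather than being thrown across the threshold, so that the single-interval overshoot into $I_2$ is genuinely governed by $G$.
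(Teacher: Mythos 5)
Your proposal is correct and follows essentially the same route as the paper's own proof: the sign-flipped integral $F(q)=\int_r^q \frac{ds}{-\varphi(s)}$, the per-interval overshoot bound $y(t)\leq F^{-1}\left(F(y(t_i))+(t-t_i)\right)$, the net contraction $y(t_{i+1})\leq F^{-1}\left(F(y(t_i))-\delta\right)$, and the resulting gain $\gamma(r)=\psi_1^{-1}\left(F^{-1}(F(\chi(r))+\theta)\right)$ all coincide with the paper's argument. Your forward-invariance claim for $I_2$ is just a repackaging of the paper's direct estimate $V(x(t))\leq F^{-1}\left(F(\chi(\|u\|_{U_c}))+\theta\right)$ for $t>t^*$ (combined with the observation that jumps from outside $I_1$ land strictly back inside it), so the two proofs are substantively identical.
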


\begin{proof}
Fix any $x \in X$, take $u \equiv 0$ and choose any sequence of impulse times $T=\{t_i\}_{i=1}^{\infty}$,
$T \in \tilde{S}_{\theta}$. We denote $x(\cdot)=\phi(\cdot,t_0,\phi_0,u)$ and $y(\cdot):=V(x(\cdot))$.

Since $-\varphi \in \PD$, from the inequality \eqref{LyapDiffIneq} we obtain 
\begin{equation}
\label{Satz2_HilfUngl_1} 
\int_{t_i}^{t} \frac{dy(\tau)}{-\varphi(y(\tau))} \leq (t-t_{i}), \quad t \in (t_{i},t_{i+1}).
\end{equation}
Fix any $r>0$ and define
\begin{equation}
\label{NeuFDef}
F(q):=\int_{r}^{q} \frac{ds}{-\varphi(s)}, \quad \forall q>0.
\end{equation}
Note that a function $F$ defined by \eqref{NeuFDef} differs from function $F$ from the proof of Theorem~\ref{NonLin_DwellTime_Satz} by the sign. 

After computations, similar to those from the proof of Theorem~\ref{NonLin_DwellTime_Satz} we obtain for $i \geq 1$ and arbitrary $t \in [t_i,t_{i+1})$
\begin{equation}
\label{Satz2_y_Absch}
y(t) \leq F^{-1} ( F(y(t_{i})) +(t-t_i)).
\end{equation}
For any $i \geq 1$ we have
\begin{equation}
\label{Satz2_Lyap_Abschaetzung_Allgemein}
y(t_{i+1}) \leq F^{-1} ( F(y(t_{i}))- \delta) \leq  F^{-1} ( F(y(t_{1}))- i \delta).
\end{equation}

Now take arbitrary $u \in U_c$ and define $I_1$ as in \eqref{I1_Definition}. Similarly to the proof of Theorem~\ref{NonLin_DwellTime_Satz} there exist time $t^*$ (depending on $\phi_0$ and $u$) and $\beta \in \KL$ so that 
\begin{equation}
\label{Satz2_Beta_Abschaetzung}
\|x(t)\|_X \leq \beta(\|\phi_0\|_X,t-t_0),\quad t \leq t^*.
\end{equation}
Let us find an estimate of $\|x(t)\|_X$ for $t > t^*$. Since $\alpha < id$, the trajectory of \eqref{ImpSystem} cannot leave the set $I_1$ by a jump. 

Denote $t_s:=\inf\{t_i: t_i>t^*\}$. 
For $t \in [t^*,t_s)$ inequality \eqref{Satz2_y_Absch} implies
\begin{equation*}
V(x(t)) \leq F^{-1} ( F(V(x(t^*))) + \theta) \leq F^{-1} ( F( \chi(\|u\|_{U_c}) ) + \theta).
\end{equation*}
Since $V(x(t_s))<V(x(t^*))$ due to \eqref{Satz2_Lyap_Abschaetzung_Allgemein} we obtain
\begin{equation}
\label{Satz2_AG_Abschaetzung}
\|x(t)\|_X \leq \gamma(\|u\|_{U_c}), \quad t > t^*,
\end{equation}
where $\gamma(r) =\psi_1^{-1} \left( F^{-1} ( F( \chi(r) ) + \theta) \right)$.
This estimate together with \eqref{Satz2_Beta_Abschaetzung} proves ISS of the impulsive system \eqref{ImpSystem} for all impulse time sequences $T \in \tilde{S}_{\theta}$.
\end{proof}

\begin{Satz}
\label{GS_Satz_2}
Let the assumptions of the Theorem \ref{NonLin_DwellTime_Satz_2} hold with
$\delta:=0$. Then the system \eqref{ImpSystem} is GS uniformly over $\tilde{S}_{\theta}$.
\end{Satz}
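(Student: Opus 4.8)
The plan is to rerun the proof of Theorem~\ref{NonLin_DwellTime_Satz_2} verbatim up to the per-jump estimate and then to trade the $\KL$-bound for a $\Kinf$-bound, exactly as in the passage from Theorem~\ref{NonLin_DwellTime_Satz} to Theorem~\ref{GS_Satz}. First I would fix $\phi_0 \in X$, set $u \equiv 0$, choose $T \in \tilde{S}_{\theta}$, and keep the notation $y(\cdot)=V(x(\cdot))$ together with the function $F$ from \eqref{NeuFDef}. Repeating the computation that leads to \eqref{Satz2_y_Absch} and \eqref{Satz2_Lyap_Abschaetzung_Allgemein}, but now with $\delta=0$, the dwell-time inequality \eqref{Gen_Dwell-Time_2} becomes $F(a)-F(\alpha(a)) \geq \theta$, so the jump step degenerates into $F(y(t_{i+1})) \leq F(y(t_i))$, i.e.\ $y(t_{i+1}) \leq y(t_i)$. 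Hence $\{y(t_i)\}_{i \geq 1}$ is nonincreasing and $y(t_i) \leq y(t_1)$ for every $i$.

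The second step is to bound $y$ on each interjump interval. Because $t_{i+1}-t_i \leq \theta$, estimate \eqref{Satz2_y_Absch} gives $y(t) \leq F^{-1}(F(y(t_i))+\theta) \leq F^{-1}(F(y(t_1))+\theta)$ on $[t_i,t_{i+1})$; applying the same bound on $[t_0,t_1)$ and using $y(t_1) \leq y(t_0)$ yields $\sup_{t \geq t_0} y(t) \leq F^{-1}(F(y(t_0))+\theta)$. In analogy with $\tilde{\xi}$ from the proof of Theorem~\ref{GS_Satz}, I would therefore set $\tilde{\xi}(s):=F^{-1}(F(s)+\theta)$ for $s>0$ and $\tilde{\xi}(0):=0$, and then $\xi(r):=\psi_1^{-1}(\tilde{\xi}(\psi_2(r)))$, obtaining $\|x(t)\|_X \leq \xi(\|\phi_0\|_X)$ for all $t$, just as in \eqref{Xi_Abschaetzung}. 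That $\tilde{\xi} \in \Kinf$ is routine: it inherits continuity and strict monotonicity from $F$, and $\tilde{\xi}(s) \to 0$ as $s \to +0$ because \eqref{Gen_Dwell-Time_2} forces $\int_{+0} \frac{ds}{-\varphi(s)}=+\infty$ (otherwise the slab integral over $[\alpha(a),a]$ could not stay $\geq \theta$ as $a \to +0$), and hence $F(s) \to -\infty$ as $s \to +0$.

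The third step treats a general admissible $u$ by reusing the set $I_1$ from \eqref{I1_Definition}. As in Theorem~\ref{NonLin_DwellTime_Satz_2}, the inequality $\alpha < \mathrm{id}$ forbids the trajectory to leave $I_1$ through a jump, while on an interjump interval of length at most $\theta$ the flow can inflate $V$ from the level $\chi(\|u\|_{U_c})$ only up to $F^{-1}(F(\chi(\|u\|_{U_c}))+\theta)$; this reproduces the gain $\gamma(r)=\psi_1^{-1}(F^{-1}(F(\chi(r))+\theta))$ of \eqref{Satz2_AG_Abschaetzung}. Splicing the two regimes (before and after the trajectory first enters $I_1$) through \eqref{Satz2_Beta_Abschaetzung} with the $\xi$-bound above gives $\|x(t)\|_X \leq \xi(\|\phi_0\|_X)+\gamma(\|u\|_{U_c})$ for all $t \geq t_0$; since neither $\xi$ nor $\gamma$ depends on $t_0$ or on the chosen $T \in \tilde{S}_{\theta}$, this is uniform global stability over $\tilde{S}_{\theta}$.

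The step I expect to be most delicate is the first interjump interval $[t_0,t_1)$: now that the continuous dynamics is destabilizing, an uncontrolled gap $t_1-t_0$ would let $V$ grow beyond any bound depending on $\phi_0$ alone, so the argument really relies on $t_1-t_0 \leq \theta$ to produce the clean estimate $y(t_1) \leq y(t_0)$ and to keep $\tilde{\xi}$ finite. The accompanying subtlety is to confirm that the amplification map $\tilde{\xi}$ is a genuine $\Kinf$ function rather than a merely nondecreasing one, which is precisely where the dwell-time inequality \eqref{Gen_Dwell-Time_2} enters near the origin.
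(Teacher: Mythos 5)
Your proposal is correct and follows exactly the route the paper prescribes: the paper's own proof of this theorem is literally the one-sentence remark that one combines the proofs of Theorems~\ref{NonLin_DwellTime_Satz_2} and \ref{GS_Satz}, which is precisely what you carry out --- the per-jump monotonicity $y(t_{i+1})\leq y(t_i)$ obtained from the dwell-time bound with $\delta=0$, the amplification map $\tilde\xi(s)=F^{-1}(F(s)+\theta)$ replacing the $\KL$ estimate of \eqref{Satz2_Beta_Abschaetzung}, and the gain $\gamma(r)=\psi_1^{-1}\left(F^{-1}(F(\chi(r))+\theta)\right)$ inherited from \eqref{Satz2_AG_Abschaetzung}. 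Your additional observation that \eqref{Gen_Dwell-Time_2} forces $F(s)\to-\infty$ as $s\to+0$, so that $\tilde\xi\in\Kinf$, is a correct detail that the paper leaves implicit.
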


\begin{proof}
The proof is a combination of ideas from the proofs of Theorems~\ref{NonLin_DwellTime_Satz_2} and \ref{GS_Satz}.
\end{proof}

\subsection{Sufficient condition in terms of exponential ISS-Lyapunov functions}

Theorem \ref{NonLin_DwellTime_Satz} can be used, in particular, for systems possessing exponential ISS-Lyapunov functions, but for this particular class of systems even stronger result can be proved.

For a given sequence of impulse times denote by $N(t,s)$ the number of jumps within the interval $(s,t]$.

\begin{Satz}
\label{ExpCase}
Let $V$ be an exponential ISS-Lyapunov function for \eqref{ImpSystem} with corresponding coefficients $c \in \R$, $d \neq 0$. For arbitrary function $h:\R_+ \to(0,\infty)$, for which there exists $g \in \LL$: $h(x) \leq g(x)$ for all $x \in \R_+$
consider the class $\SSet[h]$ of impulse time-sequences, satisfying the generalized average dwell-time (gADT) condition:
\begin{equation}
\index{dwell-time condition!generalized average}
\index{generalized ADT}
\label{Dwell-Time-Cond}
-dN(t,s) - c(t-s) \leq  \ln h(t-s), \quad \forall t\geq s \geq t_0.
\end{equation}
Then the system \eqref{ImpSystem} is uniformly ISS over $\SSet[h]$.
\end{Satz}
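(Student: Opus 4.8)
The plan is to reduce everything to a single exponential decay estimate for $V$ along the flow and then to convert it into the $\KL$-plus-gain bound of Definition~\ref{ISS_One_System}, in the spirit of the proof of Theorem~\ref{NonLin_DwellTime_Satz}. Write $x(\cdot)=\phi(\cdot,t_0,\phi_0,u)$ and $y(\cdot)=V(x(\cdot))$, and fix $T\in\SSet[h]$. The heart of the argument is the observation that, as long as $y(\tau)\geq\chi(\|u\|_{U_c})$ on an interval $[s,t]$, the quantity $W_s(\tau):=e^{c(\tau-s)+dN(\tau,s)}y(\tau)$ is nonincreasing: on the continuous pieces $N(\cdot,s)$ is constant and $\frac{d}{d\tau}W_s(\tau)=e^{c(\tau-s)+dN(\tau,s)}\big(c\,y(\tau)+\dot y(\tau)\big)\leq 0$ because $\dot V_u(x)\leq -cV(x)$, while across each jump the factor $e^{d}$ picked up by $e^{dN}$ is exactly cancelled by the contraction $V(g(x,\xi))\leq e^{-d}V(x)$. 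Hence $W_s(t)\leq W_s(s)$, which rearranges to
\begin{equation*}
y(t)\leq e^{-dN(t,s)-c(t-s)}\,y(s),
\end{equation*}
and then the gADT condition \eqref{Dwell-Time-Cond} together with $h\leq g$ yields the key bound $y(t)\leq h(t-s)\,y(s)\leq g(t-s)\,y(s)$ whenever $y\geq\chi(\|u\|_{U_c})$ on $[s,t]$. Note that this step is insensitive to the signs of $c$ and $d$, which is what makes a single argument cover all admissible rate coefficients.

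From here the construction mirrors Theorem~\ref{NonLin_DwellTime_Satz}. Taking $s=t_0$ gives $y(t)\leq g(t-t_0)\psi_2(\|\phi_0\|_X)$ as long as the trajectory has not yet entered $I_1:=\{x:V(x)\leq\chi(\|u\|_{U_c})\}$; since $g\in\LL$ and $\psi_2\in\Kinf$, the map $(r,\tau)\mapsto\psi_1^{-1}(g(\tau)\psi_2(r))$ is a legitimate $\KL$ function $\beta$, and $\|x(t)\|_X\leq\beta(\|\phi_0\|_X,t-t_0)$ for all $t\leq t^*:=\inf\{t:x(t)\in I_1\}$. It remains to produce the gain: I would show that for $t>t^*$ the trajectory is trapped in a slightly larger sublevel set $I_2:=\{x:V(x)\leq\tilde\alpha(\|u\|_{U_c})\}\supseteq I_1$, and then set $\gamma:=\psi_1^{-1}\circ\tilde\alpha$, majorized by a $\K$-function if necessary. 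Both $\beta$ and $\gamma$ are built only from $g,\chi,\psi_1,\psi_2$ and the coefficients $c,d$, none of which depend on the particular sequence $T\in\SSet[h]$; this is precisely what delivers the claimed uniformity.

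I expect the trapping step for $t>t^*$ to be the main obstacle, because, unlike in Theorem~\ref{NonLin_DwellTime_Satz} where $\varphi\in\PD$ forces the flow to point inward on $\partial I_1$, here the sign of $c$ is unrestricted. If $c\geq 0$ the boundary argument of Theorem~\ref{NonLin_DwellTime_Satz} applies verbatim: the trajectory can leave $I_1$ only through a jump, at which $V$ is amplified by at most $e^{-d}$, so $I_2$ with $\tilde\alpha(r)=\max\{\max_{0\leq s\leq\chi(r)}e^{-d}s,\ \chi(r)\}$ is invariant. If $c<0$ the continuous flow itself can carry the state out of $I_1$, and the excursion must be bounded by hand. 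The device I would use is that \eqref{Dwell-Time-Cond} with $N(t,s)=0$ forces $-c(t-s)\leq\ln g(t-s)$, whose right-hand side is bounded above (recall $g(0)\geq h(0)\geq 1$) and tends to $-\infty$; hence there is a finite $\theta^*$ bounding the length of any jump-free interval, so between consecutive impulses $V$ grows by a factor no larger than $e^{|c|\theta^*}$. Enlarging $\tilde\alpha$ by this factor makes $I_2$ invariant in this case as well, and combining the two cases yields $V(x(t))\leq\tilde\alpha(\|u\|_{U_c})$ for $t>t^*$, which together with the $\beta$-bound establishes \eqref{ISS_Abschaetzung} uniformly over $\SSet[h]$.
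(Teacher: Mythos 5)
Your central estimate is correct and is in fact the same as the paper's: the monotonicity of $W_s(\tau)=e^{c(\tau-s)+dN(\tau,s)}V(x(\tau))$ along flow and jumps is just a repackaging of the bound $V(x(t))\leq e^{-dN(t,s)-c(t-s)}V(x(s))$, which the paper derives interval-by-interval and then combines with \eqref{Dwell-Time-Cond}; your $\KL$-bound up to the first entry time into $I_1$ also matches the paper's. The genuine gap is in the trapping step. For $c\geq 0$, $d<0$ you assert that the boundary argument of Theorem~\ref{NonLin_DwellTime_Satz} applies ``verbatim'', so that $I_2$ with $\tilde\alpha(r)=\max\{e^{-d}\chi(r),\chi(r)\}$ is invariant. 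But that argument rested on the FDT condition \eqref{Gen_Dwell-Time}, which forces $V$ to decrease strictly from one jump time to the next (the inequality $y(t)<y(t_i)$ for $t\in(t_i,t_{i+1}]$ in that proof); hence after a single amplifying jump out of $I_1$ the value of $V$ can never again exceed its post-jump level before returning to $I_1$. The gADT condition \eqref{Dwell-Time-Cond} gives no such per-interval decrease: it constrains only the cumulative quantity $-dN(t,s)-c(t-s)$ and explicitly allows bursts of several jumps separated by arbitrarily little flow time. Concretely, take $c=1$, $d=-1$, $h(x)=e^{10-x/2}$ (admissible, with $g=h\in\LL$); then \eqref{Dwell-Time-Cond} reads $N(t,s)\leq 10+\tfrac{1}{2}(t-s)$ and permits $10$ jumps in an arbitrarily short window. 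Starting just inside $\partial I_1$, such a burst drives $V$ up to roughly $e^{10}\chi(\|u\|_{U_c})$, far outside your $I_2$, so your set is not invariant. The same objection undermines the $c<0$ patch: an excursion outside $I_1$ may span several inter-impulse intervals, each with net growth factor $e^{|c|\theta_i-d}>1$, so the single-interval factor $e^{|c|\theta^*}$ does not by itself bound the excursion; only the cumulative gADT bound does.

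The repair is to deploy your own $W_s$-estimate after $t^*$ as well, which is exactly what the paper does. On every maximal excursion $[s,t)$ on which $V(x(\cdot))\geq\chi(\|u\|_{U_c})$, the estimate gives $V(x(t))\leq h(t-s)\,V(x(s))\leq C\,V(x(s))$ with $C:=\sup_{x\geq 0}h(x)<\infty$ (finite because $h\leq g\in\LL$), while at the entry time $V(x(s))\leq\max\{1,e^{-d}\}\chi(\|u\|_{U_c})$, since entry occurs either by continuous flow or by a single jump from inside $I_1$. This yields the gain $\gamma(r)=\psi_1^{-1}\bigl(C\max\{1,e^{-d}\}\chi(r)\bigr)$, with no case distinction on the sign of $c$ at all; together with your $\beta(r,\tau)=\psi_1^{-1}(g(\tau)\psi_2(r))$, both comparison functions depend only on $g,\chi,\psi_1,\psi_2,c,d$, which gives the claimed uniformity over $\SSet[h]$.
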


\begin{proof}
Pick any $h$ as in the statement of the theorem.
Fix arbitrary $u \in U_c$, $\phi_0 \in X$, choose the increasing sequence of impulse times $T=\{t_i\}_{i=1}^{\infty} \in \SSet[h]$ and denote $x(t)= \phi(t,t_0,\phi_0,u)$ for short.

Due to the right-continuity of $x(\cdot)$ the interval $[t_0,\infty)$ can be decomposed into subintervals as
$[t_0,\infty) = \cup_{i=0}^{\infty} [t^*_i,t^*_{i+1})$ (the case, when this decomposition is finite, can be treated in the same way),
so that $\forall k \in \N \cup \{0\}$ the following inequalities hold
\begin{equation}
\label{GegebeneSchranke1}
V(x(t)) \geq \chi(\|u\|_{U_c}) \text{ for } t \in [t^*_{2k},t^*_{2k+1}),
\end{equation}
\begin{equation}
\label{GegebeneSchranke2}
V(x(t)) < \chi(\|u\|_{U_c})  \text{ for } t \in [t^*_{2k+1},t^*_{2k+2}).
\end{equation}

Let us estimate $V(x(t))$ on the time-interval $I_{2k}=(t^*_{2k},t^*_{2k+1}]$ for arbitrary $k \in \N \cup \{0\}$.

Within the interval $I_{2k}$ there are $r_k:=N(t^*_{2k},t^*_{2k+1})$ jumps at times $t^{k}_1,\ldots, t^{k}_{r_k}$.
To simplify the notation, we denote also $t^k_0:=t^*_{2k}$.

For $t \in (t^k_i,t^k_{i+1}]$, $i=0,\ldots,r_k$ we have $V(x(t)) \geq \chi(\|u\|_{U_c})$, thus from \eqref{ISS_LF_Imp} and \eqref{Lyap_ExpFunk} we obtain
\begin{equation}
\label{ContDynExpSatz}
 \dot{V}(x(t)) \leq -c V(x(t)), \ t \in (t^k_i,t^k_{i+1}]
\end{equation}
and thus
\[
V(x^-(t^k_{i+1})) \leq e^{-c(t^k_{i+1}-t^k_{i})} V(x(t^k_{i})).
\]
At the impulse time $t=t^k_{i+1}$ we know from \eqref{ISS_LF_Imp} and \eqref{Lyap_ExpFunk} that
\[
 V(x(t^k_{i+1})) \leq e^{-d} V(x^-(t^k_{i+1}))
\]
and consequently
\[
V(x(t^k_{i+1})) \leq e^{-d-c(t^k_{i+1}-t^k_{i})} V(x(t^k_{i})).
\]
For all $t \in I_{2k}$ from \eqref{ContDynExpSatz} and previous inequality we obtain the  following estimate
\[
V(x(t)) \leq e^{-d\cdot N(t,t^*_{2k}) -c(t-t^*_{2k})} V(x(t^*_{2k})).
\]
Dwell-time condition \eqref{Dwell-Time-Cond} implies
\begin{eqnarray}
\label{IntervallSchranke}
V(x(t)) \leq h(t-t^*_{2k}) V(x(t^*_{2k})), t \in I_{2k}.
\end{eqnarray}

Take $\tau:=\inf\{t \geq t_0: V(x(t)) \leq \chi(\|u\|_{U_c})\}$. We are going to find an upper bound of the trajectory on $[t_0,\tau]$ as a $\KL$-function.

Taking in \eqref{IntervallSchranke} $t^*_{2k}:=t_0$ we obtain
\begin{eqnarray}
\label{KL_Schranke}
V(x(t)) \leq  h(t-t_0) V(\phi_0).
\end{eqnarray}
According to assumptions of the theorem, $\exists g \in \LL$: $h(x) \leq g(x)$ for all $x \in \R_+$.
Using \eqref{LF_ErsteEigenschaft}, we obtain that $\forall t \in [t_0,\tau]$ it holds
\begin{eqnarray*}
\|x(t)\|_X \leq \psi_1^{-1}(g(t-t_0) \psi_2(\|\phi_0\|_X))=: \beta(\|\phi_0\|_X,t-t_0).
\end{eqnarray*}

On arbitrary interval of the form $[t^*_{2k+1},t^*_{2k+2})$, $k \in
\N \cup \{0\}$ we have already the bound on $V(x(t))$ by
\eqref{GegebeneSchranke2}. Since $t^*_{2k+2}$ can be an impulse
time, we have the estimate
\[
V(x(t^*_{2k+2})) \leq \max\{1,e^{-d}\} \chi(\|u\|_{U_c}).
\]
From the properties of $h$ it follows, that
 $\exists C_{\lambda}=\sup_{x\geq 0}\{h(x)\} < \infty$.
Hence for arbitrary $t > \tau$
we obtain with the help of \eqref{IntervallSchranke} the estimate
\[
V(x(t)) \leq C_{\lambda} \max\{1,e^{-d}\} \chi(\|u\|_{U_c}).
\]
Overall, for all $t \geq t_0$ we have
\begin{eqnarray*}
\|x(t)\|_X \leq \beta(\|\phi_0\|_X,t-t_0) + \gamma(\|u\|_{U_c}),
\end{eqnarray*}
where
$\gamma(r)=\psi_1^{-1}(C_{\lambda} \max\{1,e^{-d}\} \chi(r))$.
This proves, that the system \eqref{ImpSystem} is ISS. The uniformity is clear since the functions $\beta$ and $\gamma$ do not depend on the impulse time sequence.
\hfill  \end{proof}

\begin{remark}
Theorem \ref{ExpCase} generalizes Theorem 1 from \cite{HLT08}, where
this result for the function $h$ with $h(x)=e^{\mu-\lambda x}$ has
been proved.
\end{remark}

The condition \eqref{Dwell-Time-Cond} is tight, i.e., if for some sequence $T$ the function $N(\cdot,\cdot)$ does not satisfy the condition \eqref{Dwell-Time-Cond} for every function $h$ from the statement of the Theorem \ref{ExpCase}, then one can construct a certain system \eqref{ImpSystem} which will not be ISS w.r.t. the impulse time sequence $T$.

This one can see from the following simple example. Consider
\begin{equation*}
\left \{
\begin {array} {l}
\dot{x}=-cx, \quad t \notin T,\\
x(t)=e^{-d}x^-(t), \quad t \in T
\end {array}
\right.
\end {equation*}
with initial condition $x(0)=x_0$.
Its solution for arbitrary time sequence $T$ is given by
\[
x(t)=e^{-dN(t,t_0)-c(t-t_0)}x_0.
\]
If $T$ does not satisfy the gADT condition, then $e^{-dN(t,t_0)-c(t-t_0)}$ cannot be estimated from above by $\LL$-function, and consequently, the system under consideration is not GAS.

We state also the local version of Theorem \ref{ExpCase}:
\begin{Satz}
\label{ExpCase_Local}
Let $V$ be an exponential LISS-Lyapunov function for \eqref{ImpSystem} with corresponding coefficients $c \in \R$, $d \neq 0$. For arbitrary function $h:\R_+ \to(0,\infty)$, s.t. $\exists g \in \LL$: $h(x) \leq g(x)$ for all $x \in \R_+$
there exist a constant $\rho(h)$, such that the system \eqref{ImpSystem} is uniformly LISS with this $\rho$ over  the class $\SSet[h]$ of impulse time-sequences, satisfying \eqref{Dwell-Time-Cond}.
\end{Satz}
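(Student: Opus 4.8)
The plan is to rerun the proof of Theorem~\ref{ExpCase} essentially verbatim, the single new ingredient being an a~priori invariance argument that confines the trajectory to the region in which the \emph{local} estimates \eqref{ISS_LF_Imp}--\eqref{Lyap_ExpFunk} are valid. Let $\rho_V>0$ and $\chi\in\Kinf$ be the data furnished by the exponential LISS-Lyapunov function in Definition~\ref{ISS_Imp_LF}, so that the decay and jump inequalities hold whenever $\|x\|_X\leq\rho_V$ and $\|\xi\|_U\leq\rho_V$. Exactly as in Theorem~\ref{ExpCase} set $C_\lambda:=\sup_{x\geq 0}h(x)$, which is finite since $h\leq g$ with $g\in\LL$, and put $M:=C_\lambda\max\{1,e^{-d}\}$ and $\Omega:=\psi_1(\rho_V)$.

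First I would fix the radius. Define
\[
\rho(h) := \min\Big\{\, \rho_V,\ \psi_2^{-1}\big(\tfrac{\Omega}{2C_\lambda}\big),\ \chi^{-1}\big(\tfrac{\Omega}{2M}\big) \,\Big\},
\]
where the factor $\tfrac12$ only serves to make the forthcoming inequalities strict. The point of this choice is that for $\|\phi_0\|_X\leq\rho(h)$ and $\|u\|_{U_c}\leq\rho(h)$ one has $C_\lambda\psi_2(\|\phi_0\|_X)<\Omega$ and $M\chi(\|u\|_{U_c})<\Omega$; these are precisely the two quantities bounding $V(x(\cdot))$ along the whole trajectory in the proof of Theorem~\ref{ExpCase}.

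Next comes the invariance step, which is the heart of the localisation. Let $t^\dagger:=\sup\{t\geq t_0 : \|x(s)\|_X\leq\rho_V \text{ for all } s\in[t_0,t]\}$ and suppose for contradiction that $t^\dagger<\infty$. On $[t_0,t^\dagger)$ we have $\|x(s)\|_X\leq\rho_V$ and $\|u(s)\|_U\leq\|u\|_{U_c}\leq\rho_V$, so the implication \eqref{ISS_LF_Imp} with \eqref{Lyap_ExpFunk} is available at every point and the entire chain of estimates of Theorem~\ref{ExpCase} applies unchanged on this interval, yielding $V(x(t))\leq\max\{C_\lambda V(\phi_0),\, M\chi(\|u\|_{U_c})\}<\Omega$ for all $t\in[t_0,t^\dagger)$, hence $\|x(t)\|_X\leq\psi_1^{-1}(V(x(t)))<\rho_V$ strictly. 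I would then rule out an escape at $t^\dagger$ by distinguishing two cases: if $t^\dagger\notin T$ the map $x(\cdot)$ is continuous at $t^\dagger$ and the strict bound passes to the limit, so $\|x(t^\dagger)\|_X<\rho_V$ and $\|x\|_X$ stays below $\rho_V$ on a right-neighbourhood, contradicting maximality; if $t^\dagger\in T$ then $\|x^-(t^\dagger)\|_X<\rho_V$ and the jump bookkeeping of Theorem~\ref{ExpCase}---in which $M$ already absorbs the factor $\max\{1,e^{-d}\}$ coming from a jump---yields $V(x(t^\dagger))<\Omega$, i.e. $\|x(t^\dagger)\|_X<\rho_V$ again. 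In both cases we contradict the definition of $t^\dagger$, so $t^\dagger=\infty$.

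Once invariance is secured the trajectory never leaves $\{\|x\|_X\leq\rho_V\}$, the Lyapunov conditions hold for all $t\geq t_0$, and the remaining construction is identical to that of Theorem~\ref{ExpCase}: the $\KL$-bound $\beta(r,t)=\psi_1^{-1}(g(t)\psi_2(r))$ governs the trajectory until it enters the sublevel set $\{V\leq\chi(\|u\|_{U_c})\}$, and afterwards $V$ stays below $C_\lambda\max\{1,e^{-d}\}\chi(\|u\|_{U_c})$, producing the gain $\gamma(r)=\psi_1^{-1}(M\chi(r))$. Since $\beta$, $\gamma$ and $\rho(h)$ depend only on $h$ (through $C_\lambda$ and $g$) and on the Lyapunov data, but not on the particular sequence $T\in\SSet[h]$, this establishes \emph{uniform} LISS over $\SSet[h]$ with radius $\rho(h)$. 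The only genuinely new difficulty compared with Theorem~\ref{ExpCase} is this first-exit argument, and within it the careful treatment of an escape occurring exactly at a jump time, where the discontinuity of $x(\cdot)$ forbids a naive continuity argument and forces one to rely on the jump inequality together with the amplification constant $M$.
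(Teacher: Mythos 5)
Your proposal is correct and follows exactly the route the paper intends: its own proof of Theorem~\ref{ExpCase_Local} is a two-sentence sketch stating that one repeats the proof of Theorem~\ref{ExpCase} after choosing $\rho$ small enough that the trajectory remains in the domain where the LISS-Lyapunov estimates hold, and your explicit choice of $\rho(h)$ together with the first-exit invariance argument is precisely a careful implementation of that sketch. The only addition is your detailed treatment of a possible escape at a jump time, which fills in (correctly) what the paper leaves implicit.
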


\begin{proof}
The proof of this result is similar to the proof of Theorem \ref{ExpCase}. The only difference is that one has to choose $\rho$ small enough to guarantee that the system evolves on the domain of definition of ISS-Lyapunov function $V$.
\hfill  \end{proof}

\subsection{Relations between different types of dwell-time conditions}
\label{Versch_KGB}

For the system \eqref{ImpSystem} which possesses an exponential ISS-Lyapunov function we have introduced two different types of dwell-time conditions: generalized ADT condtion \eqref{Dwell-Time-Cond} and fixed dwell-time condition \eqref{Gen_Dwell-Time}. In this section we are going to find a relation between these conditions as well as between ADT condition from \cite{HLT08}. See also \cite{Hes04}, where some other sets of switching signals and relations between them have been investigated.

Taking in the gADT \eqref{Dwell-Time-Cond} $h(x) = e^{\mu-\lambda x}$ for some $\mu,\lambda>0$, we obtain the ADT condition from \cite{HeM99b}, \cite{HLT08}:
\begin{equation}
\index{dwell-time condition!average}
\index{ADT}
\label{Classical_Dwell-Time-Cond}
-dN(t,s) - (c - \lambda)(t-s) \leq  \mu, \quad \forall t\geq s \geq t_0.
\end{equation}
The set of impulse time sequences, which satisfies this condition we denote $\SSet[\mu,\lambda]:=\SSet[ e^{\mu-\lambda \cdot }]$.

The gADT condition \eqref{Dwell-Time-Cond} provides for a system \eqref{ImpSystem} in addition to jumps, allowed by ADT \eqref{Classical_Dwell-Time-Cond} the possibility to jump
infinite number of times (on the time-interval of the infinite length), however, these jumps must be "not too close" to each other. Consider, for example $h(x) = (x+1)e^{\mu-\lambda x}$. This choice of $h$ leads to the following dwell-time condition
\begin{equation*}
-dN(t,s) - (c - \lambda)(t-s) \leq  \mu + \ln(t-s+1), \quad \forall t\geq s \geq t_0.
\end{equation*}
Locally (for small $t-s$) it holds $\mu >> \ln(t-s+1)$, and we obtain the estimate similar to \eqref{Classical_Dwell-Time-Cond}. But for large $t-s$ the above DT condition is a considerably weaker restriction than classical ADT condition \eqref{Classical_Dwell-Time-Cond}.

Of course, the more extra jumps we allow, the larger are the gain $\gamma$ and  function
$\beta$, which can be seen from the proof of Theorem~\ref{ExpCase}.

For a given sequence of impulse times denote by $N^*(t,s)$ the number of jumps within the time-interval $[s,t]$. The set of impulse time sequences, for which \eqref{Classical_Dwell-Time-Cond} holds with $N^*(t,s)$ instead of $N(t,s)$, denote by $\SSet^*[\mu,\lambda]$.
We need the following lemma (see \cite[Lemma 3.12.]{DKM11}):
\begin{Lemm}
\label{Dwell-Time_Equiv}
Let $c,d \in \R$, $d \neq 0$ be given. Then $\SSet[\mu,\lambda] = \SSet^*[\mu,\lambda]$ for all $\mu,\lambda>0$.
\end{Lemm}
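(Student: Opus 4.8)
The plan is to establish the two inclusions $\SSet^*[\mu,\lambda]\subseteq\SSet[\mu,\lambda]$ and $\SSet[\mu,\lambda]\subseteq\SSet^*[\mu,\lambda]$ separately, exploiting the fact that $N(t,s)$ (jumps in $(s,t]$) and $N^*(t,s)$ (jumps in $[s,t]$) differ only in the treatment of the left endpoint. Since $[s,t]$ and $(s,t]$ differ only by the point $s$, one has $N^*(t,s)=N(t,s)$ when $s\notin T$ and $N^*(t,s)=N(t,s)+1$ when $s\in T$; in particular $N^*(t,s)\geq N(t,s)$ always. For brevity I would set $\Phi(t,s):=-dN(t,s)-(c-\lambda)(t-s)$ and $\Phi^*(t,s):=-dN^*(t,s)-(c-\lambda)(t-s)$, so that $T\in\SSet[\mu,\lambda]$ (resp. $T\in\SSet^*[\mu,\lambda]$) is equivalent to $\Phi(t,s)\leq\mu$ (resp. $\Phi^*(t,s)\leq\mu$) for all $t\geq s\geq t_0$.

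The main device is a small horizontal shift of the left endpoint, which is legitimate because $T$ has no finite accumulation points: for every $s$ there is $\eps_0>0$ such that the punctured interval $(s-\eps_0,s+\eps_0)\setminus\{s\}$ contains no impulse time, so that both $(s-\eps,s)$ and $(s,s+\eps)$ are free of impulses whenever $0<\eps<\eps_0$. For such $\eps$ we get the two identities $N(t,s)=N^*(t,s+\eps)$ and $N^*(t,s)=N(t,s-\eps)$, and a direct substitution into the definitions of $\Phi,\Phi^*$ yields
\begin{equation*}
\Phi(t,s)=\Phi^*(t,s+\eps)-(c-\lambda)\eps, \qquad \Phi^*(t,s)=\Phi(t,s-\eps)+(c-\lambda)\eps .
\end{equation*}

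For $\SSet^*[\mu,\lambda]\subseteq\SSet[\mu,\lambda]$ I would fix $T\in\SSet^*[\mu,\lambda]$ and any $t>s\geq t_0$ (the degenerate case $t=s$ being trivial), choose $\eps\in(0,t-s)$ small as above, and combine the first identity with $\Phi^*(t,s+\eps)\leq\mu$ (applicable since $t\geq s+\eps\geq t_0$) to obtain $\Phi(t,s)\leq\mu-(c-\lambda)\eps$; letting $\eps\to 0^+$ gives $\Phi(t,s)\leq\mu$. This step is insensitive to the signs of $c$ and $d$, which is why no case distinction is needed. The reverse inclusion $\SSet[\mu,\lambda]\subseteq\SSet^*[\mu,\lambda]$ is almost perfectly symmetric: for $t\geq s>t_0$ I would shift to $s-\eps$ via the second identity (choosing also $\eps\leq s-t_0$) to get $\Phi^*(t,s)\leq\mu+(c-\lambda)\eps\to\mu$.

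The one place where this symmetry breaks, and the point I expect to be the only genuinely delicate one, is the boundary value $s=t_0$ in the reverse inclusion, since there the left endpoint cannot be pushed below $t_0$. This is resolved by the standing convention that no impulse occurs at the initial instant, so that $t_0\notin T$ and hence $N^*(t,t_0)=N(t,t_0)$, giving $\Phi^*(t,t_0)=\Phi(t,t_0)\leq\mu$ directly. Assembling the two inclusions yields $\SSet[\mu,\lambda]=\SSet^*[\mu,\lambda]$ for every $\mu,\lambda>0$. Incidentally, the above argument never uses $d\neq 0$; that hypothesis is irrelevant to this purely combinatorial set identity and only plays a role in the surrounding ISS statements.
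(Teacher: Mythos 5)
Your proof is correct, but there is nothing in the paper to compare it against: the paper states Lemma \ref{Dwell-Time_Equiv} without proof, quoting it as Lemma 3.12 of \cite{DKM11}. Judged on its own, your argument is sound and self-contained. The two endpoint-shift identities $\Phi(t,s)=\Phi^*(t,s+\eps)-(c-\lambda)\eps$ and $\Phi^*(t,s)=\Phi(t,s-\eps)+(c-\lambda)\eps$ do hold for all sufficiently small $\eps>0$, precisely because $T$ is a strictly increasing sequence without finite accumulation points; letting $\eps\to 0^+$ then yields both inclusions, the degenerate case $t=s$ is trivial since $\mu>0$, and you are right that the whole argument is insensitive to the signs of $c-\lambda$ and $d$, and that the hypothesis $d\neq 0$ is never needed for this purely combinatorial identity.

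Two remarks on the boundary case $s=t_0$, which you correctly single out as the only delicate step. First, the ``standing convention'' $t_0\notin T$ is not stated explicitly in the paper (the definition of $S_\theta$ only requires $\{t_i\}_1^{\infty}\subset[t_0,\infty)$); it is, however, implicit in the well-posedness of \eqref{ImpSystem}, since the jump rule $x(t)=g(x^-(t),u^-(t))$, $t\in T$, requires left limits that do not exist at the initial instant. It would be cleaner to justify $t_0\notin T$ on these grounds than to invoke an unstated convention. Second, your instinct that this point is genuinely delicate is vindicated by the fact that the assumption cannot be dropped: if $t_1=t_0$ were admissible, take $d=-1$, $c-\lambda=1$, $\mu=1$, $t_2=t_0+\delta$ with $0<\delta<1$, and all later impulses widely spaced; the jump at $t_0$ lies in no interval $(s,t]$ with $s\geq t_0$, so condition \eqref{Classical_Dwell-Time-Cond} holds, while $-dN^*(t_2,t_0)-(c-\lambda)(t_2-t_0)=2-\delta>\mu$, so the starred condition fails, and the inclusion $\SSet[\mu,\lambda]\subseteq\SSet^*[\mu,\lambda]$ would be false. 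Your observation that the opposite inclusion needs no such care --- there the shift moves the left endpoint upward, staying in the admissible range $s\geq t_0$ --- is also correct; the asymmetry is real.
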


%
%

Let us show the relation between ADT and FDT conditions.

If the system \eqref{ImpSystem} possesses an exponential ISS Lyapunov function with rate coefficients $c,d \in \R$, $d<0$ then Theorem \ref{NonLin_DwellTime_Satz} guarantees, that for all $\delta>0$ and $\theta >0$, such that
\begin{equation}
\index{dwell-time condition!fixed}
\index{FDT}
\label{FDT_ExpLF_Case}
\int_a^{\alpha(a)} {\frac{ds}{\varphi(s)}} = \frac{-d}{c} \leq \theta - \delta
\end{equation}
holds the system \eqref{ImpSystem} is ISS for the time-sequences from the class $S_{\theta}$.

Clearly, for all positive numbers $\theta$, $\delta$, satisfying \eqref{FDT_ExpLF_Case} there exists $\lambda>0$, such that the following condition holds with the same $\theta$
\begin{equation}
\label{Lin_Dwell-Time}
\frac{1}{\theta} \leq \frac{c - \lambda}{-d},
\end{equation}
and vice versa.

For a given $\lambda$ the smallest $\theta$ (which corresponds to the largest $S_{\theta}$) is given by $\theta_* = \frac{-d}{c - \lambda}$.

Next lemma provides an equivalent representation of the set $S_{\theta_*}$.
\begin{Lemm}
Let $c>0$ and $d<0$ be given. Then it holds $S_{\theta_*} = \SSet[-d,\lambda]$.
\end{Lemm}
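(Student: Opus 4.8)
The plan is to unwind both sides to a single scalar inequality on the jump-counting function and then establish the two inclusions separately. First I would recall that $\SSet[-d,\lambda]$ is by definition the class $\SSet[h]$ with $h(x)=e^{-d-\lambda x}$, so that membership is exactly the validity of \eqref{Classical_Dwell-Time-Cond} with $\mu=-d$, namely $-dN(t,s)-(c-\lambda)(t-s)\le -d$ for all $t\ge s\ge t_0$. Since $c>0$, $d<0$ and $\lambda\in(0,c)$ (which is what makes $\theta_*=\frac{-d}{c-\lambda}$ a well-defined positive number), dividing this inequality by $-d>0$ shows that it is equivalent to
\[
N(t,s)\le 1+\frac{t-s}{\theta_*},\qquad \forall\, t\ge s\ge t_0,
\]
because $\frac{c-\lambda}{-d}=\frac{1}{\theta_*}$.

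The next step is to pass from the half-open count $N$ to the closed-interval count $N^*$, which I would do by invoking Lemma \ref{Dwell-Time_Equiv}: it gives $\SSet[-d,\lambda]=\SSet^*[-d,\lambda]$, so a sequence lies in $\SSet[-d,\lambda]$ iff $N^*(t,s)\le 1+\frac{t-s}{\theta_*}$ for all $t\ge s\ge t_0$. Working with $N^*$ avoids the boundary subtleties of the open endpoint and makes both implications transparent.

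For the inclusion $S_{\theta_*}\subseteq\SSet[-d,\lambda]$, I would fix an interval $[s,t]$ and let $t_k<\dots<t_{k+m-1}$ be the $m=N^*(t,s)$ impulse times lying in it. If $m\ge 1$, the $m-1$ gaps between consecutive jumps are each at least $\theta_*$, so $t_{k+m-1}-t_k\ge (m-1)\theta_*$; together with $s\le t_k$ and $t_{k+m-1}\le t$ this yields $(m-1)\theta_*\le t-s$, i.e. $m\le 1+\frac{t-s}{\theta_*}$ (the case $m=0$ being trivial). Hence the reformulated membership condition holds. Conversely, for $\SSet[-d,\lambda]\subseteq S_{\theta_*}$ I would simply test the condition on $[s,t]=[t_i,t_{i+1}]$: this closed interval contains exactly the two jumps $t_i,t_{i+1}$, so $N^*(t_{i+1},t_i)=2$, and the bound $2\le 1+\frac{t_{i+1}-t_i}{\theta_*}$ is precisely $t_{i+1}-t_i\ge\theta_*$ for every $i$, i.e. $T\in S_{\theta_*}$.

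The only genuinely delicate point is the mismatch between counting jumps in $(s,t]$ and in $[s,t]$: a direct argument with $N$ forces one to take one-sided limits $s\uparrow t_i$ and to worry about the endpoint $t_0$, whereas routing the argument through $N^*$ via Lemma \ref{Dwell-Time_Equiv} lets the reverse inclusion be read off by the single clean choice $s=t_i$, $t=t_{i+1}$. I expect no further obstacles, since the forward inclusion is just the elementary span estimate above.
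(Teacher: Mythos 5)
Your proof is correct and follows essentially the same route as the paper's: both reduce the statement to the counting inequality for $N^*$ via Lemma \ref{Dwell-Time_Equiv} and then verify the two inclusions, with the forward one given by the same span estimate $(m-1)\theta_*\le t-s$. The only cosmetic difference is in the reverse inclusion, where the paper deduces $N^*(t,s)\le k$ on intervals of length in $((k-1)\theta_*,k\theta_*)$ by integrality of $N^*$, while you test the condition directly on $[t_i,t_{i+1}]$ with $N^*=2$; both yield $t_{i+1}-t_i\ge\theta_*$ immediately.
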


\begin{proof}
Clearly, for arbitrary $T \in S_{\theta_*}$ it holds
\begin{equation*}
N^*(t,s) \leq 1 + \frac{c - \lambda}{-d}(t-s), \quad \forall t\geq s \geq t_0,
\end{equation*}
or
\begin{equation}
\label{Dwell-time_SpezFall}
-dN^*(t,s) - (c - \lambda)(t-s)  \leq -d, \quad \forall t\geq s \geq t_0.
\end{equation}
On the contrary, let \eqref{Dwell-time_SpezFall} hold. Then for $t-s = k   \theta_*$ we obtain $N^*(t,s) \leq k+1$ and for $t-s \in ((k-1)   \theta_*,k   \theta_*)$
it follows $N^*(t,s) \leq k$ (since $N^*(t,s)$ is a natural number).
This proves that $S_{\theta_*} = \SSet^*[-d,\lambda]$. From Lemma \ref{Dwell-Time_Equiv} the claim of the lemma follows.
\hfill  \end{proof}

In other words, Theorem \ref{Gen_Dwell-Time}, applied to the exponential ISS Lyapunov functions, states that if the system \eqref{ImpSystem} possesses an exponential ISS Lyapunov function $V$ with rate coefficients $c,d$, then for all $\lambda>0$  the system \eqref{ImpSystem} is ISS for all sequences from the class $\SSet[-d,\lambda]$.

\begin{figure}
\centering
  \setlength{\unitlength}{1bp}%
  \begin{picture}(323.83, 198.07)(0,0)
  \put(0,0){\includegraphics{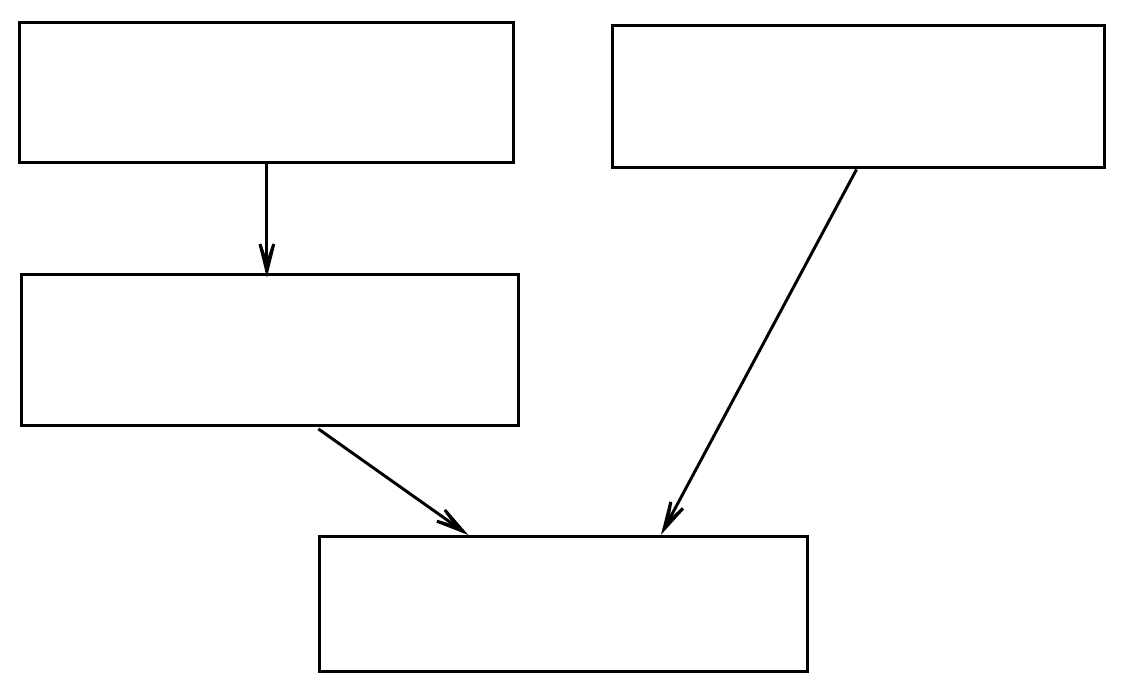}}	
\put(40.81,179.23){\fontsize{9.23}{17.07}\selectfont generalized ADT}
\put(6.81,159.23){\fontsize{9.23}{17.07}\selectfont $-dN(t,s) - c(t-s) \leq  \ln h(t-s)$}
\put(52.81,135.23){\fontsize{9.23}{17.07}\selectfont $h(x):=e^{\mu-\lambda x}$}

\put(52.41,106.68){\fontsize{9.23}{17.07}\selectfont average DT}
  \put(16.41,86.68){\fontsize{9.23}{17.07}\selectfont $-dN(t,s) - (c-\lambda)(t-s) \leq \mu$}
\put(225.38,178.38){\fontsize{9.23}{17.07}\selectfont fixed DT}
  \put(200.38,158.38){\fontsize{11.23}{17.07}\selectfont $\int_a^{\alpha(a)} {\frac{ds}{\varphi(s)}} \leq \theta - \delta$}

    \put(92.81,61.23){\fontsize{9.23}{17.07}\selectfont $\mu:=-d$}
  \put(136.44,21.45){\fontsize{12.23}{17.07}\selectfont $\frac{1}{\theta} \leq \frac{c- \lambda}{-d}$}
  \end{picture}%
\caption{Relations between different types of dwell-time conditions}
\label{DT_Cond_Figure}
\end{figure}

\begin{remark}
Note that for $\mu \in (0,-d)$ the set of the impulse time sequences, which are allowed by ADT condition are $\SSet[\mu,\lambda]=\emptyset$. Indeed, by the ADT condition
for small enough $t-s$ we obtain
\[
N(t,s)-\frac{c-\lambda}{-d}(t-s) \leq \frac{\mu}{-d} < 1,
\]
i.e.  $N(t,s) = 0$. Covering $[0,\infty)$ by small enough intervals, we obtain that $N(t_0,\infty)=0$, and the impulses are not allowed.
\end{remark}

The relations between different types of dwell-time conditions are summarized in Figure~\ref{DT_Cond_Figure}. 

In the next half of the paper we are going to provide the methods for construction of ISS-Lyapunov functions. In the next section we show how the local exponential ISS-Lyapunov functions can be constructed via linearization technique. Afterwards we focus our attention on construction of global ISS-Lyapunov functions for interconnected impulsive systems.

\subsection{Constructions of exponential LISS Lyapunov functions via linearization}
\label{Linearisierung_Imp}

Consider an impulsive system \eqref{ImpSystem} on a Hilbert space $X$ with a scalar product $\lel \cdot, \cdot \rir$, and let $A$ be the infinitesimal generator of an analytic semigroup on $X$ with the domain of definition $D(A)$.
Let a function $f:X \times U \to X$ be defined on some open set $Q$, $(0,0) \in Q$.

In \cite[Theorem 3]{DaM12b} it was proved that for the system \eqref{ImpSystem} with $T = \emptyset$ (when only continuous behavior is allowed) under certain conditions a LISS-Lyapunov function can be constructed.

In this section we prove a counterpart of \cite[Theorem 3]{DaM12b} for impulsive systems, which allows us to construct an exponential LISS-Lyapunov function for linearizable systems of the form \eqref{ImpSystem}.

Let us assume, that $f$ and $g$ can be decomposed in the following way
\[
f(x,u)=Bx+ Cu + f_1(x,u),
\]
\[
g(x,u)=Dx+ Fu + g_1(x,u),
\]
where $C,F \in L(U,X)$, $B,D \in L(X)$.
Here we denote by $L(U,X)$ a space of linear bounded operators from $U$ to $X$, $L(X):=L(X,X)$.

Let also for each constant $w>0$ there exists $\rho>0$, such that $\forall x: \|x\|_X \leq \rho,\ \forall u: \|u\|_U \leq \rho$ it holds
\[
\|f_1(x,u)\|_X \leq w (\|x\|_X+\|u\|_U),
\]
\[
\|g_1(x,u)\|_X \leq w (\|x\|_X+\|u\|_U).
\]

We recall that a self-adjoint operator $P$ on the Hilbert space $X$ is coercive, if $\exists \epsilon >0$, such that $\lel Px,x \rir \geq \epsilon \|x\|^2_X \quad \forall x \in D(P)$.
The largest of such $\epsilon$ is called the lower bound of an operator $P$.

Consider a linear approximation of continuous dynamics of a system \eqref{ImpSystem}:
\begin{equation}
\label{LinearInfiniteSyst_Imp}
\dot{x}=Rx + Cu,
\end{equation}
where $R=A+B$ is the infinitesimal generator of an analytic semigroup (which we denote by $T$), as a sum of the generator of an analytic semigroup $A$ and bounded operator $B$.

%

We have the following theorem:
\begin{Satz}
\index{linearization theorem!impulsive systems}
\label{LinearisationSatz}
If the system \eqref{LinearInfiniteSyst_Imp} is ISS and if there exists a bounded coercive operator $P$, satisfying
\begin{equation*}
\lel Rx,Px \rir + \lel Px,Rx \rir=-\|x\|^2_X, \quad \forall x \in D(A),
\end{equation*}
then a LISS-Lyapunov function of \eqref{ImpSystem} can be constructed in the form
\begin{equation}
\label{LF_LinearInfiniteSyst}
V(x)=\lel Px,x \rir.
\end{equation}
\end{Satz}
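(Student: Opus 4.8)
The plan is to verify directly that $V(x)=\langle Px,x\rangle$ meets the requirements of an exponential LISS-Lyapunov function in the sense of Definition~\ref{ISS_Imp_LF}, passing through the dissipative/max form of Proposition~\ref{Prop:ExpLyapEquiv}, and to read off rate coefficients $c>0$ and some $d\in\R$ whose sign is not controlled. First I would record the sandwich estimate \eqref{LF_ErsteEigenschaft}: since $P$ is self-adjoint, bounded and coercive with lower bound $\epsilon>0$, one has $\epsilon\|x\|_X^2 \le \langle Px,x\rangle \le \|P\|\,\|x\|_X^2$ for all $x\in X$, so \eqref{LF_ErsteEigenschaft} holds with $\psi_1(r)=\epsilon r^2$ and $\psi_2(r)=\|P\|r^2\in\Kinf$.

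For the continuous dynamics I would appeal to \cite[Theorem 3]{DaM12b}: since the flow $\phi_c$ ignores the jumps, the continuous part of the present system is exactly the system treated there, and the hypotheses (ISS of the linearization \eqref{LinearInfiniteSyst_Imp}, existence of the coercive solution $P$ of the Lyapunov equation, and the superlinear bound on $f_1$) coincide. Concretely, along the flow the Lyapunov equation yields $\dot V_u(x) = -\|x\|_X^2 + 2\Re\langle Px,\,Cu+f_1(x,u)\rangle$; bounding $|\langle Px,Cu\rangle|\le\|P\|\|C\|\|x\|_X\|u\|_U$ and $|\langle Px,f_1(x,u)\rangle|\le\|P\|\|x\|_X\,w(\|x\|_X+\|u\|_U)$, choosing $\rho$ (hence $w$) small, and absorbing the cross terms by Young's inequality gives $\dot V_u(x)\le-\tfrac12\|x\|_X^2+K\|u\|_U^2$ on $\|x\|_X,\|u\|_U\le\rho$. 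Using $\|x\|_X^2\ge V(x)/\|P\|$ this becomes the implication $V(x)\ge\chi(\|u\|_U)\Rightarrow\dot V_u(x)\le-cV(x)$ with $c=\tfrac{1}{4\|P\|}>0$ and $\chi(r)=4\|P\|K r^2\in\Kinf$.

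The genuinely new ingredient is the jump condition, which is elementary. For $\|x\|_X,\|\xi\|_U\le\rho$ I would estimate
\[
V(g(x,\xi)) \le \|P\|\,\|Dx+F\xi+g_1(x,\xi)\|_X^2 \le \|P\|\big((\|D\|+w)\|x\|_X+(\|F\|+w)\|\xi\|_U\big)^2,
\]
and split the square via $(a+b)^2\le(1+\eta)a^2+(1+\eta^{-1})b^2$ together with $\|x\|_X^2\le V(x)/\epsilon$ to get
\[
V(g(x,\xi)) \le \frac{(1+\eta)\|P\|(\|D\|+w)^2}{\epsilon}\,V(x) + (1+\eta^{-1})\|P\|(\|F\|+w)^2\|\xi\|_U^2 .
\]
Converting the sum into a maximum (at the cost of a factor $2$ absorbed into the constants) puts this in the form $V(g(x,\xi))\le\max\{e^{-d}V(x),\,\tilde\gamma(\|\xi\|_U)\}$ with $e^{-d}:=\tfrac{2(1+\eta)\|P\|(\|D\|+w)^2}{\epsilon}$ and $\tilde\gamma(r):=2(1+\eta^{-1})\|P\|(\|F\|+w)^2r^2\in\Kinf$; note that $d$ may be of either sign, which is exactly the regime where the dwell-time Theorems~\ref{NonLin_DwellTime_Satz} and~\ref{ExpCase} are needed.

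Finally I would take the common gain $\gamma:=\max\{\chi,\tilde\gamma\}\in\Kinf$, observe that both the continuous dissipation implication and the jump estimate remain valid with this larger $\gamma$ on the local domain $\{\|x\|_X\le\rho\}\times\{\|\xi\|_U\le\rho\}$, and invoke the argument of Proposition~\ref{Prop:ExpLyapEquiv} (its ``$\Rightarrow$'' part), restricted to this domain, to conclude that $V(x)=\langle Px,x\rangle$ is an exponential LISS-Lyapunov function with coefficients $c,d$. The main obstacle is not the jump estimate but the rigorous justification of the identity $\dot V_u(x)=-\|x\|_X^2+2\Re\langle Px,Cu+f_1(x,u)\rangle$ for the unbounded generator $A$: differentiating $t\mapsto\langle Px(t),x(t)\rangle$ and applying the Lyapunov equation on $D(A)$ requires the analytic-semigroup machinery, and this is precisely the part I would delegate to \cite[Theorem 3]{DaM12b}.
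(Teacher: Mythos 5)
Your proof is correct, and its skeleton coincides with the paper's: the coercivity sandwich giving \eqref{LF_ErsteEigenschaft}, delegation of the continuous-flow estimate (including the justification of differentiating $V$ along mild solutions of the unbounded problem) to \cite[Theorem 3]{DaM12b}, and a direct quadratic expansion of $V(g(x,u))$ for the jump part. The only genuine divergence is in how the input is threaded through the estimates. The paper fixes the gain $\chi(r)=\sqrt{r}$ at the outset and works in the implication form of Definition~\ref{ISS_Imp_LF} throughout: under $\|x\|_X \geq \sqrt{\|u\|_U}$, i.e. $\|u\|_U \leq \|x\|_X^2$, every $u$-dependent term in the expansion of $V(g(x,u))$ is dominated by a multiple of $\|x\|_X^2$ on the ball $\|x\|_X,\|u\|_U\leq\rho$, so the jump estimate collapses to $V(g(x,u)) \leq r_2\|x\|_X^2 \leq (r_2/\epsilon)V(x)$ with no separate input term and no appeal to Proposition~\ref{Prop:ExpLyapEquiv}. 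You instead keep dissipative-form estimates (Young's inequality for the flow, the $(a+b)^2$ split for the jump), arrive at the max-form conditions, and convert back through the ``$\Rightarrow$'' direction of Proposition~\ref{Prop:ExpLyapEquiv} restricted to the local domain. Both routes are sound; yours is more systematic and makes the constants $c$, $d$ and the gains explicit, while the paper's choice $\chi(r)=\sqrt{r}$ is a shortcut that avoids the sum-to-maximum conversion entirely. One small caveat in your final step: the conversion behind Propositions~\ref{Prop:LyapEquiv} and~\ref{Prop:ExpLyapEquiv} replaces $\alpha(s)=e^{-d}s$ by a strictly larger linear function, so the resulting rate coefficient is slightly worse than the $d$ you computed; this is harmless here, since the sign of $d$ was not controlled anyway and the dwell-time theorems tolerate any $d\neq 0$.
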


\begin{proof}
Since $P$ is bounded and coercive, for some $\epsilon>0$ it holds 
\[
\epsilon \|x\|^2_X \leq \lel Px,x \rir \leq \|P\| \|x\|^2_X, \quad \forall x \in X,
\]
and the estimate \eqref{LF_ErsteEigenschaft} is verified.

Define $\chi \in \Kinf$ by $\chi(r)=\sqrt{r}$, $r \geq 0$.
In \cite[Theorem 3]{DaM12b}
it was proved, that for small enough $\rho_1 >0$, $\forall x: \|x\|_X~\leq~\rho_1,\ \forall u: \|u\|_U~\leq~\rho_1$ it holds
\[
\|x\|_X \geq \chi(\|u\|_U) \quad \Rightarrow \quad \dot{V}(x) \leq - r \|x\|^2_X \leq
- \frac{r}{\|P\|} V(x)
\]
for some $r>0$.

Now we estimate $V(g(x,u))$:
\begin{eqnarray*}
V(g(x,u)) &= &\lel P(Dx+ Fu + g_1(x,u)),Dx+ Fu + g_1(x,u) \rir \\
&\leq& \|P\| \left(\|D\|^2 \|x\|^2_X + \|F\|^2 \|u\|^2_U + 2\|D\| \|F\| \|x\|\|u\|_U  \right.\\
 & &  + 2(\|D\| \|x\|_X + \|F\| \|u\|_U) w (\|x\|_X+\|u\|_U)  +  \left. w^2 (\|x\|_X+\|u\|_U)^2 \right).
\end{eqnarray*}
One can verify, that $\exists r_2, \rho_2 >0$, such that
$\forall x: \|x\|_X \leq \rho_2,\ \forall u: \|u\|_U \leq \rho_2$
\[
\|x\|_X \geq \chi(\|u\|_U) \quad \Rightarrow \quad V(g(x,u)) \leq r_2 \|x\|^2_X \leq \frac{r_2}{\eps} V(x).
\]
Taking $\rho:=\min\{\rho_1,\rho_2\}$, we obtain, that $V$ is an exponential LISS Lyapunov function for a system \eqref{ImpSystem}.
\hfill  \end{proof}

\section{ISS of interconnected impulsive systems}
\label{ISS_Kopplungen}

In the previous subsection we have developed a linearization method for construction of LISS-Lyapunov functions for impulsive systems \eqref{ImpSystem}. Now we are going to provide a method for construction of ISS-Lyapunov functions for interconnected systems which is based on the knowledge of ISS-Lyapunov functions for subsystems.

Let a Banach space $X_i$ be the state space of the $i$-th subsystem, $i=1,\ldots,n$, and $U$ and $U_c=PC(\R_+,U)$ be the space of input values and of input functions respectively.

Define $X=X_{1}\times\ldots\times X_{n}$,
which is a Banach space, which we endow with the norm $\|\cdot\|_{X}:=\|\cdot\|_{X_{1}}+\ldots+\|\cdot\|_{X_{n}}$.

The input space for the $i$-th subsystem is
$\tilde{X}_i:=X_1 \times \ldots \times X_{i-1} \times X_{i+1} \times \ldots \times X_n \times U$.
The norm in $\tilde{X}_i$ is given by
\[
\|\cdot\|_{\tilde{X}_i}:=\|\cdot\|_{X_{1}}+\ldots + \|\cdot\|_{X_{i-1}} + \|\cdot\|_{X_{i+1}} + \ldots +\|\cdot\|_{X_{n}} + \|\cdot\|_{U}.
\]
The elements of $\tilde{X}_i$ we denote by $\tilde{x}_i=(x_1,\ldots,x_{i-1},x_{i+1},\ldots,x_n,\xi) \in \tilde{X}_i$.

Also let $T=\{t_1,\ldots, t_k,\ldots \}$ be a sequence of impulse times for all subsystems (we assume, that all subsystems jump at the same time).

Consider the system consisting of $n$ interconnected impulsive subsystems:
\begin{equation}
\label{Kopplung_N_Systeme_Imp}
\left \{
\begin {array} {l}
{ \dot{x}_i(t)=A_ix_i(t) + f_i(x_1(t),\ldots,x_n(t),u(t)),\quad t \notin T,} \\
{ x_i(t)=g_i(x^-_1(t),\ldots,x^-_n(t), u^-(t)),\quad t \in T,} \\
{i=\overline{1,n}}
\end {array}
\right.
\end {equation}

Here $A_i$ is the generator of a $C_{0}$-semigroup on $X_{i}$, $f_i,g_i:X \times U \to X_i$, and we assume that the solution of each subsystem exists, is unique and forward-complete.


For $x_i \in X_i,\ i=1,\ldots, n$ define
$x=(x_{1},\ldots,x_{n})^{T}$, $f(x,u)=(f_{1}(x,u),\ldots,f_{n}(x,u))^{T}$,
$g(x,u)=(g_{1}(x,u),\ldots,g_{n}(x,u))^{T}$.

By $A$ we denote the diagonal operator $A:=diag(A_{1},\ldots,A_{n})$, i.e.:
\[
A=\left(
\begin{array}{cccc}
A_{1} & 0 & \ldots & 0\\
0 & A_{2} & \ldots & 0\\
\vdots & \vdots & \ddots & \vdots\\
0 & 0 & \ldots & A_{n}
\end{array}\right)
\]
Domain of definition of $A$ is given by $D(A)=D(A_{1})\times\ldots\times D(A_{n})$.
Clearly, $A$ is the generator of a $C_{0}$-semigroup on $X$.

We rewrite the system \eqref{Kopplung_N_Systeme_Imp} in the vector form:
\begin{equation}
\label{KopplungHauptSys_Imp}
\left \{
\begin {array} {l}
{ \dot{x}(t)=Ax(t) + f(x(t),u(t)),\quad t \notin T} \\
{ x(t)=g(x^-(t),u^-(t)),\quad t \in T.}
\end {array}
\right.
\end {equation}

Before analyzing of a system \eqref{KopplungHauptSys_Imp} we note that for general interconnections of impulsive systems each subsystem may possess its own 
sequence of impulse times $T_i$. The interconnected system in this case takes the form
\begin{equation}
\label{Kopplung_N_Systeme_Imp_NFolg}
\left \{
\begin {array} {l}
{ \dot{x}_i(t)=A_ix_i(t) + f_i(x_1(t),\ldots,x_n(t),u(t)),\quad t \notin T_i,} \\
{ x_i(t)=g_i(x^-_1(t),\ldots,x^-_n(t), u^-(t)),\quad t \in T_i,} \\
{i=\overline{1,n}}
\end {array}
\right.
\end{equation}

In contrast to the system \eqref{Kopplung_N_Systeme_Imp}, it is impossible to rewrite the system \eqref{Kopplung_N_Systeme_Imp_NFolg} in the form \eqref{ImpSystem}.
One can construct the aggregated sequence of impulse times for the whole system as $T:=\cup_{i=1}^nT_i$, but the function $g$ for the whole system will still depend on the time-sequences $T_i$, $i=1,\ldots,n$. This means that \eqref{Kopplung_N_Systeme_Imp_NFolg} is not simply a "large scale impulsive system", but a more complicated type of hybrid systems. The theory developed in this paper as well as (to our knowledge) in other current literature on ISS of impulsive systems cannot be applied to such systems. 
Development of such theory is an interesting topic for future research.

Let us proceed with analysis of a system \eqref{KopplungHauptSys_Imp}.

According to the Proposition~\ref{Prop:LyapEquiv} for the $i$-th subsystem of a system \eqref{Kopplung_N_Systeme_Imp} the definition of an ISS-Lyapunov function can be written as follows.
A continuous function $V_{i}:X_{i}\to\R_{+}$ is an ISS-Lyapunov function
for $i$-th subsystem of \eqref{Kopplung_N_Systeme_Imp}, if three properties hold (to avoid unnecessary complications we consider only the case when the continuous dynamics of the subsystems is stabilizing):

\begin{enumerate}
    \item There exist functions $\psi_{i1},\psi_{i2}\in\Kinf$, such that:
\[
\psi_{i1}(\|x_{i}\|_{X_{i}})\leq V_{i}(x_{i})\leq\psi_{i2}(\|x_{i}\|_{X_{i}}),\quad\forall x_{i}\in X_{i}
\]
\item  There exist $\chi_{ij},\chi_{i}\in\K$, $j=1,\ldots,n$, $\chi_{ii}:=0$ and
$\varphi_i \in \PD$, so that for all $x_i \in X_i$, for all $\tilde{x}_i \in \tilde{X}_i$ and for all $v \in PC(\R_+,\tilde{X}_i)$ with $v(0)=\tilde{x}_i$ from
\begin{equation}
\label{ISS-LF_Annahme}
V_i(x_{i})\geq\max\{ \max_{j=1}^{n}\chi_{ij}(V_{j}(x_{j})),\chi_{i}(\|\xi\|_{U})\},
\end{equation}
it follows
\begin{equation}
\label{GainImplikationNSyst_Imp}
\dot{V_{i}}(x_i(t)) \leq -\varphi_i\left(V_i(x_i(t))\right),
\end{equation}
where
\[
\dot{V}_i(x_{i})=\mathop{\overline{\lim}}\limits _{t\rightarrow+0}\frac{1}{t}(V_i(\phi_{i,c}(t,0,x_{i},v)))-V_i(x_{i})),
\]
and $\phi_{i,c}:\R_+ \times \R_+ \times X_i \times PC(\R_+,\tilde{X}_i) \to X_i$ is the solution (transition map) of the $i$-th subsystem of \eqref{Kopplung_N_Systeme_Imp} for the case if $T=\emptyset$.
\item There exists $\alpha_{i} \in \PD$, such that for gains defined above and for all $x \in X$ and for all $\xi \in U$ it holds
\begin{equation}
\label{GainImplikationNSyst_Imp_Diskt}
V_i(g_{i}(x,\xi))\leq \max\{ \alpha_i(V_i(x_i)),\max_{j=1}^{n}\chi_{ij}(V_{j}(x_{j})),\chi_{i}(\|\xi\|_{U})\}.
\end{equation}
\end{enumerate}

If $\varphi_i(y)=c_i y$ and $\alpha_i (y) =e^{-d_i}y$ for all $y \in \R_+$, then $V_i$ is called an exponential ISS-Lyapunov function for the $i$-th subsystem of \eqref{Kopplung_N_Systeme_Imp} with rate coefficients $c_i, d_i \in \R$.

Lyapunov gains $\chi_{ij}$ characterize the interconnection structure of
subsystems.
Let us introduce the gain operator $\Gamma:\R_{+}^{n}\rightarrow\R_{+}^{n}$
defined by
\begin{equation}
\label{operator_gamma}
\Gamma(s):=\left(\max_{j=1}^{n}\chi_{1j}(s_{j}),\ldots,\max_{j=1}^{n}\chi_{nj}(s_{j})\right),\ s\in\R_{+}^{n}.
\end{equation}

We recall the notion of $\Omega$-path (see \cite{DRW10,Rue10}), useful for investigation of stability of interconnected systems and for a construction of a Lyapunov function of the whole system.
\begin{Def}
\label{Omega-Path-Def}
A function $\sigma=(\sigma_{1},\dots,\sigma_{n})^{T}:\R_{+}^{n}\rightarrow\R_{+}^{n}$,
where $\sigma_{i}\in\K_{\infty}$, $i=1,\ldots,n$ is called an \textit{$\Omega$-path},
if it possesses the following properties:
\begin{enumerate}
    \item $\sigma_{i}^{-1}$ is locally Lipschitz continuous on $(0,\infty)$;
    \item for every compact set $P\subset(0,\infty)$ there are finite
constants $0<K_{1}<K_{2}$ such that for all points of differentiability
of $\sigma_{i}^{-1}$ we have
\begin{align*}
0<K_{1}\leq(\sigma_{i}^{-1})'(r)\leq K_{2},\quad\forall r\in P ;
\end{align*}
\item
\begin{align}
\label{sigma cond 2}
\Gamma(\sigma(r))<\sigma(r),\ \forall r>0.
\end{align}
\end{enumerate}
\end{Def}

If operator $\Gamma$ satisfies the small-gain condition
\begin{align}
\label{smallgaincondition}
\Gamma(s)\not\geq s,\ \forall\ s\in\R_{+}^{n}\backslash\left\{ 0\right\},
\end{align}
then $\Omega$-path exists \cite{DRW10}.

Now we prove a small-gain theorem for nonlinear impulsive systems. The technique for treatment of the discrete dynamics is adopted from \cite{NeT08} and \cite{DaK09}.
\begin{Satz}
\label{KleinGainSatz_ImpSys}
\index{small-gain theorem!in Lyapunov formulation!for impulsive systems}
Consider the system \eqref{Kopplung_N_Systeme_Imp}.
Let $V_{i}$ be the ISS-Lyapunov function for $i$-th subsystem of \eqref{Kopplung_N_Systeme_Imp} with corresponding gains $\chi_{ij}$.
If the corresponding operator $\Gamma$ defined by \eqref{operator_gamma}
satisfies the small-gain condition \eqref{smallgaincondition}, then
an ISS-Lyapunov function $V$ for the whole system can be constructed as 
\begin{align}
\label{NeuLyapFun}
V(x):=\max_{i}\{\sigma_{i}^{-1}(V_{i}(x_{i}))\},
\end{align}
where $\sigma=(\sigma_{1},\ldots,\sigma_{n})^{T}$ is an $\Omega$-path.
The Lyapunov gain of the whole system can be chosen as 
\begin{equation}
\label{ChiDef}
\chi(r):=\max_{i}\sigma_{i}^{-1}(\chi_{i}(r)).
\end{equation}
\end{Satz}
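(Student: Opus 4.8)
The plan is to verify that the function $V$ defined in \eqref{NeuLyapFun} satisfies the three defining properties of an ISS-Lyapunov function as formulated in Proposition~\ref{Prop:LyapEquiv}, namely a sandwich bound of the form \eqref{LF_ErsteEigenschaft}, the dissipation implication \eqref{ISS_LF_Imp_2} along the continuous flow, and the jump estimate \eqref{ISS_LF_Imp_3}. Since the continuous part of the interconnection is exactly the setting treated by the analogous small-gain theorem for impulse-free infinite-dimensional systems in \cite{DaM12b}, the first two properties are expected to follow by essentially the same argument; the genuinely new ingredient is handling the discrete dynamics \eqref{GainImplikationNSyst_Imp_Diskt}, and this will be the heart of the proof.

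First I would establish the sandwich bound. Because each $\sigma_i \in \Kinf$ and each $V_i$ satisfies $\psi_{i1}(\|x_i\|_{X_i}) \leq V_i(x_i) \leq \psi_{i2}(\|x_i\|_{X_i})$, the maximum $V(x) = \max_i \sigma_i^{-1}(V_i(x_i))$ is squeezed between two $\Kinf$-functions of $\|x\|_X = \sum_i \|x_i\|_{X_i}$; one constructs $\psi_1,\psi_2 \in \Kinf$ from the $\sigma_i^{-1}\circ\psi_{i1}$ and $\sigma_i^{-1}\circ\psi_{i2}$ together with the equivalence of the max-norm and the sum-norm on the finite product. Next I would treat the continuous flow: fix $x$ with $V(x) \geq \chi(\|\xi\|_U)$ and let $i$ be an index attaining the maximum defining $V(x)$. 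The $\Omega$-path property \eqref{sigma cond 2}, $\Gamma(\sigma(r)) < \sigma(r)$, is exactly what guarantees that for this maximizing index the gain condition \eqref{ISS-LF_Annahme} of subsystem $i$ is activated, so that \eqref{GainImplikationNSyst_Imp} gives $\dot V_i(x_i) \leq -\varphi_i(V_i(x_i))$; pushing this through $\sigma_i^{-1}$ and taking a common positive definite lower bound $\varphi$ over the indices yields $\dot V_u(x) \leq -\varphi(V(x))$. Care is needed here because $V$ is only a maximum of smooth functions and hence not differentiable everywhere, so the estimate should be phrased in terms of the upper Dini derivative \eqref{LyapAbleitung_2}, with the maximizing index controlling the relevant one-sided limit.

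The main obstacle is the jump estimate. For the discrete step I would again pick the index $i$ realizing $V(g(x,\xi)) = \sigma_i^{-1}(V_i(g_i(x,\xi)))$ and apply \eqref{GainImplikationNSyst_Imp_Diskt}, which bounds $V_i(g_i(x,\xi))$ by a maximum of $\alpha_i(V_i(x_i))$, the cross terms $\chi_{ij}(V_j(x_j))$, and $\chi_i(\|\xi\|_U)$. The cross terms and the input term must be reabsorbed: the cross terms $\chi_{ij}(V_j(x_j))$ are controlled using $V_j(x_j) \leq \sigma_j(V(x))$ together with the $\Omega$-path inequality \eqref{sigma cond 2} to dominate them by $\sigma_i(V(x))$ (hence by $V(x)$ after applying $\sigma_i^{-1}$), while the input term is absorbed into the new gain $\chi$ from \eqref{ChiDef}. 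The delicate point is that $\alpha_i \in \PD$ need not be below the identity, so the resulting composite jump map $\alpha(s) := \max_i \sigma_i^{-1}(\max\{\alpha_i(\sigma_i(s)), \dots\})$ must be shown to be a well-defined $\PD$-function and to enter the required $\max$-form \eqref{ISS_LF_Imp_3}; I expect this bookkeeping — combining three different sources of growth under the max while keeping everything a legitimate comparison function — to be where the technical weight of the proof lies. Once \eqref{LF_ErsteEigenschaft}, \eqref{ISS_LF_Imp_2} and \eqref{ISS_LF_Imp_3} are all in hand, Proposition~\ref{Prop:LyapEquiv} immediately certifies that $V$ is an ISS-Lyapunov function with the stated gain $\chi$.
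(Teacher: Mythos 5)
Your overall strategy coincides with the paper's: verify the hypotheses of Proposition~\ref{Prop:LyapEquiv}, import the continuous-flow estimate from the impulse-free small-gain theorem of \cite{DaM12b} (maximizing index plus the $\Omega$-path inequality \eqref{sigma cond 2}), and then establish the jump estimate \eqref{ISS_LF_Imp_3} by splitting the right-hand side of \eqref{GainImplikationNSyst_Imp_Diskt} into the $\alpha_i$-term, the cross terms, and the input term. Your treatment of the cross terms (via $\chi_{ij}(V_j(x_j)) \leq \chi_{ij}(\sigma_j(V(x))) < \sigma_i(V(x))$, so that they are dominated by $V(x)$ after applying $\sigma_i^{-1}$) and of the input term (absorbed into $\chi$ from \eqref{ChiDef}) is exactly what the paper does.

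There is, however, one step in your jump bookkeeping that would fail as written. You propose the composite jump map $\alpha(s) := \max_i \sigma_i^{-1}\bigl(\max\{\alpha_i(\sigma_i(s)),\dots\}\bigr)$ and intend to conclude $\sigma_i^{-1}(\alpha_i(V_i(x_i))) \leq \alpha(V(x))$. But the only available information is $V_i(x_i) \leq \sigma_i(V(x))$, and since $\alpha_i$ is merely positive definite --- not monotone --- you cannot pass from $V_i(x_i) \leq \sigma_i(V(x))$ to $\alpha_i(V_i(x_i)) \leq \alpha_i(\sigma_i(V(x)))$. (Your stated worry, that $\alpha_i$ need not lie below the identity, is not the real obstruction; non-monotonicity is, and showing the composite map is a well-defined $\PD$-function does not cure it.) The paper repairs precisely this point: it writes $\sigma_i^{-1}(\alpha_i(V_i(x_i))) = (\sigma_i^{-1}\circ\alpha_i\circ\sigma_i)(s_i)$ with $s_i := \sigma_i^{-1}(V_i(x_i)) \leq V(x)$, sets $\tilde\alpha := \max_i\{\sigma_i^{-1}\circ\alpha_i\circ\sigma_i\} \in \PD$, and then majorizes $\tilde\alpha$ by some $\alpha^* \in \K$; only the monotonicity of $\alpha^*$ licenses the step $\tilde\alpha(s_i) \leq \alpha^*(s_i) \leq \alpha^*(V(x))$. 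The final jump map is then $\alpha := \max\{\alpha^*, \eta\}$ with $\eta := \max_{i,\,j\neq i}\{\sigma_i^{-1}\circ\chi_{ij}\circ\sigma_j\} < \Id$, after which Proposition~\ref{Prop:LyapEquiv} applies. With this monotonization inserted, your argument becomes the paper's proof.
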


\begin{proof}
The part of the proof related to continuous behavior is identical to the proof of \cite[Theorem 5]{DaM12b}.
There it was proved, that $\forall x \in X,\ \xi \in U$ from $V(x)\geq\chi(\|\xi\|_U)$ it follows
\begin{eqnarray*}
\frac{d}{dt}V(x) \leq  - \varphi(V(x)),
\end{eqnarray*}
for
\begin{eqnarray}
\label{Varphi_Def}
\varphi(r):=\min_{i=1}^{n} \left\{\left(\sigma_{i}^{-1}\right)^{\prime}(\sigma_{i}(r))\varphi_{i}(\sigma_{i}(r)) \right\}.
\end{eqnarray}
Function $\varphi$ is positive definite, because $\sigma_{i}^{-1} \in \Kinf$ and all $\varphi_i$ are positive definite functions.

Thus, implication \eqref{ISS_LF_Imp_2} is verified and it remains to check \eqref{ISS_LF_Imp_3}
(the estimation of ISS-Lyapunov function on the jumps).
With the help of inequality \eqref{GainImplikationNSyst_Imp_Diskt} we make for all $x \in X$ and $\xi \in U$ the following estimates
\begin{eqnarray*}
\label{Estimate_ImpTime_Algem}
V(g(x,\xi)) & = &\max_{i}\{\sigma_{i}^{-1}(V_{i}(g_{i}(x,\xi)))\}   \\
 &\leq& \max_{i}\{\sigma_{i}^{-1}\left( \max\{ \alpha_i(V_i(x_i)),\max_{j=1}^{n}\chi_{ij}(V_{j}(x_{j})),\chi_{i}(\|\xi\|_{U})\} \right)\}  \\
&=&
\max\{ \max_{i}\{\sigma_{i}^{-1} \circ \alpha_i(V_i(x_i)) \},
\max_{i,j \neq i} \{ \sigma_{i}^{-1} \circ \chi_{ij}(V_{j}(x_{j}))\},
\max_{i}\{\sigma_{i}^{-1} \circ \chi_{i}(\|\xi\|_{U}) \}  \} \\
&=&
\max\{ \max_{i}\{\sigma_{i}^{-1} \circ \alpha_i \circ \sigma_{i} \circ \sigma_{i}^{-1}(V_i(x_i))\},
\max_{i,j \neq i} \{ \sigma_{i}^{-1} \circ \chi_{ij} \circ \sigma_{j} \circ \sigma_{j}^{-1} (V_{j}(x_{j}))\},  \\
& &
\phantom{\max\{ } \max_{i}\{\sigma_{i}^{-1} \circ \chi_{i}(\|\xi\|_{U}) \}  \}.  \\
\end{eqnarray*}
Define $\tilde{\alpha}:=\max_{i} \{ \sigma_{i}^{-1} \circ \alpha_i \circ \sigma_{i} \}$. Since $\alpha_i \in \PD$, then $\tilde{\alpha} \in \PD$. Pick any $\alpha^*\in \K$: $\alpha^*(r) \geq \tilde{\alpha}(r)$, $r \geq 0$. Then the following estimate holds
\[
\max_{i}\{\sigma_{i}^{-1} \circ \alpha_i \circ \sigma_{i} \circ \sigma_{i}^{-1}(V_i(x_i))\} \leq \alpha^*(\max_{i}\{\sigma_{i}^{-1}(V_i(x_i))\}) = \alpha^*(V(x)).
\]
Define also $\eta:=\max_{i,j \neq i} \{ \sigma_{i}^{-1} \circ \chi_{ij} \circ \sigma_{j} \}$ and note that according to \eqref{sigma cond 2}
\[
\eta =\max_{i,j \neq i} \{ \sigma_{i}^{-1} \circ \chi_{ij} \circ \sigma_{j} \} <
\max_{i,j \neq i} \{ \sigma_{i}^{-1} \circ \sigma_{i} \} = id.
\]

We continue estimates of $V(g(x,\xi))$:
\[
V(g(x,\xi)) \leq
\max\{\alpha^*(V(x)), \eta(V(x)),\chi(\|\xi\|_{U})  \} = \max\{\alpha(V(x)),\chi(\|\xi\|_{U})  \},
\]
where
\begin{equation}
\label{Alpha_Def}
\alpha:=\max\{\alpha^*,\eta\}.
\end{equation}
According to Proposition~\ref{Prop:LyapEquiv} the function $V$ is an ISS-Lyapunov function of the system \eqref{ImpSystem}.
\hfill  \end{proof}

\begin{remark}
Our small-gain theorem has been formulated for Lyapunov functions in the form used in Proposition~\ref{Prop:LyapEquiv}. According to the Proposition~\ref{Prop:LyapEquiv} this formulation can be transformed to the standard formulation, and from the proof it is clear, that the functions $\alpha$ and $\varphi$ remain the same after the transformation.
Next in order to check, whether the system \eqref{KopplungHauptSys_Imp} is ISS, one should use Theorem~\ref{NonLin_DwellTime_Satz}.
\end{remark}

\subsection{Small-gain theorem for exponential ISS-Lyapunov functions}

If an exponential ISS-Lyapunov function for a system \eqref{ImpSystem} is given, then Theorem \ref{ExpCase} provides us with the tight estimations of the set of impulse time sequences, w.r.t. which the system \eqref{ImpSystem} is ISS and hence the exponential ISS-Lyapunov functions are "more valuable", than the general ones.

We may hope, that if ISS-Lyapunov functions for all subsystems of \eqref{Kopplung_N_Systeme_Imp} are \textit{exponential}, then the expression \eqref{NeuLyapFun} at least for certain type of gains provides  the \textit{exponential} ISS-Lyapunov function for the whole system.
In this subsection we are going to prove the small-gain theorem of this type.

Firstly note the following fact
\begin{proposition}
Let operator $\Gamma$ satisfy the small-gain condition \eqref{smallgaincondition}.
Then for arbitrary $a \in int(\R^n_+)$ the function
\begin{equation}
\label{OmegaPfad}
\sigma(t)=Q(at), \forall t \geq 0
\end{equation}
satisfies
\begin{align}
\Gamma(\sigma(r)) \leq \sigma(r),\ \forall r>0.
\end{align}
Here $Q:\R^n_+ \to \R^n_+$ is defined by
\[
Q(x):=MAX\{x,\Gamma(x), \Gamma^2(x),\ldots, \Gamma^{n-1}(x)\},
\]
with $\Gamma^n(x)=\Gamma \circ \Gamma^{n-1}(x)$, for all $n \geq 2$.
The function $MAX$ for all $h_i \in \R^n$, $i=1,\ldots,m$ is defined by
\[
z=MAX\{h_1,\ldots,h_m\} \in \R^n,\quad z_i:=\max\{h_{1i},\ldots,h_{mi}\}.
\]
\end{proposition}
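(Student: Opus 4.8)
The plan is to exploit two structural features of the max-type operator $\Gamma$ defined in \eqref{operator_gamma}: it is \emph{monotone} (order preserving), since every $\chi_{ij}$ is either in $\K$ or identically zero, and it \emph{commutes with the componentwise maximum}. The latter means that for any finite collection $h_1,\dots,h_m\in\R^n_+$ one has $\Gamma(MAX\{h_1,\dots,h_m\})=MAX\{\Gamma(h_1),\dots,\Gamma(h_m)\}$, which follows at once from $\chi_{ij}(\max_k (h_k)_j)=\max_k\chi_{ij}((h_k)_j)$, valid because each $\chi_{ij}$ is nondecreasing. First I would apply this identity to $Q(x)=MAX\{x,\Gamma(x),\dots,\Gamma^{n-1}(x)\}$ to get $\Gamma(Q(x))=MAX\{\Gamma(x),\Gamma^2(x),\dots,\Gamma^{n}(x)\}$. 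Since the terms $\Gamma(x),\dots,\Gamma^{n-1}(x)$ already occur in $Q(x)$, the desired inequality $\Gamma(Q(x))\le Q(x)$ reduces to the single estimate $\Gamma^{n}(x)\le Q(x)$. This reduction uses nothing about the special form $x=ar$, so I would in fact establish $\Gamma(Q(x))\le Q(x)$ for all $x\in\R^n_+$ and then specialize to $x=ar$; the restriction $a\in\mathrm{int}(\R^n_+)$ is only needed later to guarantee that $\sigma$ is a genuine $\Omega$-path.

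To prove $\Gamma^{n}(x)\le Q(x)$ I would pass to the path representation of the iterates of $\Gamma$: expanding the nested maxima gives, for each $i$, $(\Gamma^{m}(x))_i=\max\,\chi_{i_0 i_1}\circ\cdots\circ\chi_{i_{m-1} i_m}(x_{i_m})$, the maximum being over all index sequences $i=i_0,i_1,\dots,i_m$ in $\{1,\dots,n\}$. Fix $i$ and let $i=i_0,\dots,i_n$ attain $(\Gamma^n(x))_i$. Because it has $n+1$ entries drawn from $n$ indices, by the pigeonhole principle it repeats an index, say $i_p=i_q$ with $p<q$; deleting the intermediate block yields a shorter sequence $i_0,\dots,i_p,i_{q+1},\dots,i_n$ of length $n-(q-p)\le n-1$.

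The key point is that removing this loop can only increase the value. The loop contributes the scalar composition $c:=\chi_{i_p i_{p+1}}\circ\cdots\circ\chi_{i_{q-1} i_q}$, which carries the partial value entering at position $q$ (node $i_q=i_p$) to the partial value at position $p$, whereas the shortened path omits $c$ altogether; since the remaining head $\chi_{i_0 i_1}\circ\cdots\circ\chi_{i_{p-1} i_p}$ is monotone, it suffices to know $c<\mathrm{id}$, i.e. $c(r)<r$ for all $r>0$. This \emph{cycle condition} follows from the small-gain condition \eqref{smallgaincondition}: if some cyclic composition satisfied $c(r_0)\ge r_0$ for an $r_0>0$, one could set the components of $s$ along the cycle by $s_{j_0}=r_0$ and $s_{j_{m+1}}=\chi_{j_m j_{m+1}}^{-1}(s_{j_m})$, and zero off the cycle, producing a nonzero $s$ with $\Gamma(s)\ge s$, contradicting \eqref{smallgaincondition} (this is exactly the cycle characterization of \cite{DRW10,Rue10}). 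With $c<\mathrm{id}$ the value along the shortened path dominates that along the optimal length-$n$ path, and since the shortened path has length at most $n-1$ its value is bounded by $(\Gamma^{m}(x))_i$ for some $m\le n-1$, hence by $(Q(x))_i$. Thus $(\Gamma^n(x))_i\le(Q(x))_i$ for every $i$, as required.

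I expect the main obstacle to be the cycle step: setting up the loop bookkeeping so that the deletion of the loop is matched correctly against the composition $c$ (keeping track of which node each partial composition is "located at"), and deriving the strict cycle condition $c<\mathrm{id}$ from \eqref{smallgaincondition}, where some care is needed with the domains of the inverse gains $\chi_{ij}^{-1}$ when the $\chi_{ij}$ are bounded. These technicalities are handled in \cite{DRW10,Rue10}, so in the write-up I would either cite the cycle characterization directly or include the short construction sketched above. The remaining steps — the MAX-distributivity identity, the reduction to $\Gamma^n(x)\le Q(x)$, and the final assembly — are routine.
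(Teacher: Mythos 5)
Your proof is correct, but it takes a genuinely different route from the paper: the paper's entire proof of this proposition is the single sentence ``The result follows from \cite[Proposition 2.7 and Remark 2.8]{KaJ11}'', i.e.\ it defers to an external result, whereas you give a self-contained elementary argument. Your chain --- monotonicity of $\Gamma$, the identity $\Gamma(MAX\{h_1,\dots,h_m\})=MAX\{\Gamma(h_1),\dots,\Gamma(h_m)\}$ (valid because each $\chi_{ij}$ is nondecreasing), the resulting reduction to $\Gamma^{n}(x)\leq Q(x)$, and the pigeonhole/loop-deletion argument --- is essentially the standard proof underlying the cited result, and it makes visible two things the citation hides: the inequality $\Gamma(Q(x))\leq Q(x)$ holds for \emph{every} $x\in\R^{n}_{+}$, so the hypothesis $a\in int(\R_{+}^{n})$ is irrelevant to the inequality itself and only serves to make the components of $\sigma$ in \eqref{OmegaPfad} class-$\Kinf$ functions; and the small-gain condition \eqref{smallgaincondition} enters only through the cycle condition. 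Two minor repairs to your sketch. First, derive the cycle condition not by your forward construction with inverse gains $\chi_{ij}^{-1}$ (whose domains can indeed fail to be all of $\R_{+}$ when the $\chi_{ij}$ are bounded), but either by citing the equivalence of \eqref{smallgaincondition} and the cyclic condition \eqref{ZyklenBedin}, which this paper itself records in Section \ref{DT_SGC_Relation} with reference to \cite{DRW07}, or by assigning values \emph{backwards} along the cycle, $s_{k_{j}}:=\chi_{k_{j}k_{j+1}}(s_{k_{j+1}})$ starting from the terminal node and setting $s$ to zero off the cycle, which uses only the gains themselves. Second, note that the loop-deletion step needs only $c\leq \mathrm{id}$ rather than strict inequality; this also covers the degenerate case where the loop passes through a zero gain (e.g.\ one of the $\chi_{ii}=0$), so that $c\equiv 0$, and in the extreme case $p=0$, $q=n$ the shortened path has length zero, which is harmless since $Q(x)$ contains the term $x=\Gamma^{0}(x)$.
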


\begin{proof}
The result follows from \cite[Proposition 2.7 and Remark 2.8]{KaJ11}
\hfill
\end{proof}


Define the following class of functions
\[
P:=\{f:\R_+ \to \R_+:\exists a \geq 0,\ b>0:\;  f(s)=a s^b \; \forall s \in \R_+\}.
\]


\begin{Satz}
\label{KleinGain_Sonderfall}
\index{small-gain theorem!in Lyapunov formulation!for impulsive systems with exponential LF}
Let $V_i$ be an eISS Lyapunov function for the $i$-th subsystem of \eqref{Kopplung_N_Systeme_Imp} with corresponding gains $\chi_{ij}$, $i=1,\ldots,n$. Let also $\chi_{ij} \in P$ and let  the small-gain condition \eqref{smallgaincondition} hold. Then the
function $V:X \to \R_+$, defined by \eqref{NeuLyapFun}, where the $\sigma$ is given by \eqref{OmegaPfad}, is an eISS Lyapunov function for the whole system \eqref{KopplungHauptSys_Imp}.
\end{Satz}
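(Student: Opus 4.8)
The plan is to build directly on Theorem~\ref{KleinGainSatz_ImpSys}, which already asserts that $V$ defined by \eqref{NeuLyapFun} is an ISS-Lyapunov function, and to show that under the extra hypotheses ($\chi_{ij}\in P$ and $\sigma$ taken from \eqref{OmegaPfad}) the decay function $\varphi$ from \eqref{Varphi_Def} may be replaced by a linear one and the jump gain $\alpha$ from \eqref{Alpha_Def} by an exponential one; then Proposition~\ref{Prop:ExpLyapEquiv} finishes the argument. The structural fact I would establish first is that each $\sigma_i(t)=Q_i(at)$ is a finite maximum of power functions of $t$: since every $\chi_{ij}$ is a power function, each component of $\Gamma^k(at)$ is a maximum of monomials $\mathrm{const}\cdot t^{p}$, and $Q$ is a further finite maximum of these, so $\sigma_i\in\Kinf$ and $\sigma_i^{-1}$ is a finite minimum of power functions. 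I denote by $p_{\max}$ the largest exponent occurring among all such monomials.

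For the continuous part I would use that, at every point of differentiability, $\sigma_i$ locally coincides with a single monomial $Cr^{p}$ with $p\le p_{\max}$, which yields the homogeneity identity $(\sigma_i^{-1})'(\sigma_i(r))\,\sigma_i(r)=r/p$. Since $V_i$ is exponential, $\varphi_i(y)=c_i y$ with $c_i>0$, so the $i$-th term in \eqref{Varphi_Def} equals $(\sigma_i^{-1})'(\sigma_i(r))\,c_i\sigma_i(r)=\tfrac{c_i}{p}\,r\ge \tfrac{c_i}{p_{\max}}\,r$. Taking the minimum over $i$ gives $\varphi(r)\ge c\,r$ with $c:=\min_i c_i/p_{\max}>0$, whence $\dot V_u(x)\le-\varphi(V(x))\le-c\,V(x)$ whenever $V(x)\ge\chi(\|\xi\|_U)$. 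The only delicacy is the set of kink points of $\sigma_i^{-1}$, which has measure zero and is handled exactly as in the underlying derivation of \eqref{Varphi_Def}; the path-regularity (locally Lipschitz inverse with derivative in a compact positive range) needed there still holds for our $\sigma$.

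For the jumps I would treat the two pieces of $\alpha=\max\{\alpha^{*},\eta\}$ separately. For $\tilde\alpha=\max_i \sigma_i^{-1}\circ\alpha_i\circ\sigma_i$ with $\alpha_i(y)=e^{-d_i}y$, evaluating $\sigma_i^{-1}$ along the locally active monomial gives the uniform bound $\sigma_i^{-1}(e^{-d_i}\sigma_i(r))\le e^{-d_i/p}\,r$ with $p$ ranging over the finite exponent set, hence $\tilde\alpha(r)\le e^{-d_1}r$ for $e^{-d_1}:=\max_i\max_p e^{-d_i/p}$; I then take $\alpha^{*}(r)=e^{-d_1}r\in\K$, which dominates $\tilde\alpha$. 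For the cross term $\eta=\max_{i,j\neq i}\sigma_i^{-1}\circ\chi_{ij}\circ\sigma_j$ I would avoid the $<\mathrm{id}$ argument of Theorem~\ref{KleinGainSatz_ImpSys} and instead use the property $\Gamma(\sigma(r))\le\sigma(r)$ of the specific path \eqref{OmegaPfad}: componentwise this reads $\chi_{ij}(\sigma_j(r))\le\sigma_i(r)$, and applying the increasing map $\sigma_i^{-1}$ gives $\eta(r)\le r$. Combining, $\alpha(r)\le e^{-d}r$ with $d:=\min\{d_1,0\}$, so that \eqref{ISS_LF_Imp_3} holds in the exponential form $V(g(x,\xi))\le\max\{e^{-d}V(x),\chi(\|\xi\|_U)\}$ with $\chi$ from \eqref{ChiDef}.

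Finally I would invoke Proposition~\ref{Prop:ExpLyapEquiv} with $\gamma=\chi$: the continuous estimate and the exponential jump estimate are precisely its hypotheses, so $V$ is an exponential ISS-Lyapunov function with rate coefficients $c$ and $d$. I expect the main obstacle to be the maximum-of-power-functions structure of the $\sigma_i$: one must ensure that the computations carried out on a single locally active monomial assemble into genuinely linear (respectively exponential) bounds that are \emph{uniform} in $r$, rather than merely piecewise, and that differentiability is available where \eqref{Varphi_Def} is used. Both reduce to the boundedness of the finite exponent set, and these are exactly the points where the hypothesis $\chi_{ij}\in P$ is genuinely exploited.
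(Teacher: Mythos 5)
Your proposal is correct and follows essentially the same route as the paper's proof: both exploit the fact that the path \eqref{OmegaPfad} turns each $\sigma_i$ (hence each $\sigma_i^{-1}$) into a finite patchwork of power functions, so that the decay term in \eqref{Varphi_Def} admits a linear lower bound and the jump map \eqref{Alpha_Def} a linear upper bound, with the cross term $\eta$ dominated by the identity via $\Gamma(\sigma(r))\le\sigma(r)$. The only cosmetic differences are that the paper recomputes $\tfrac{d}{dt}V$ directly on the sets where a single monomial of $\sigma_i^{-1}$ is active instead of substituting the homogeneity identity into \eqref{Varphi_Def}, and it asserts the exponential bounds directly rather than routing the conclusion through Proposition~\ref{Prop:ExpLyapEquiv}.
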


\begin{proof}
Take the $\Omega$-path $\sigma$ as in \eqref{OmegaPfad}. It satisfies all the conditions of an $\Omega$-path, see Definition~\ref{Omega-Path-Def}, but with $\leq$ instead of $<$ in \eqref{sigma cond 2}. However, the proof of Theorem~\ref{KleinGainSatz_ImpSys} is true also for such "quasi"-$\Omega$-path.

According to Theorem~\ref{KleinGainSatz_ImpSys} function $V$, defined by \eqref{NeuLyapFun} is an ISS Lyapunov function. We have only to prove, that it is an exponential one.

For all $f,g \in P$ it follows $f \circ g \in P$, thus for all $i$ it holds that $\sigma_i(t)=\max\{f^i_1(t),\ldots,f^i_{r_i}(t)\}$, where all $f^i_k \in P$ and $r_i$ is finite.

Thus, for each $i$ there exists a partition of $\R_+$ into sets $S^i_{j}$, $j=1,\ldots,k_i$  (i.e. $\cup_{j=1}^{k_i} S^i_{j} = \R_+$ and $S^i_{j} \cap S^i_{s} = \emptyset$, if
$j \neq s$), such that $\sigma_i^{-1}(t) = a_{ij}t^{p_{ij}}$ for some $p_{ij}>0$ and all $t \in S^i_j$.  This partition is always finite, because all $f^i_j \in P$, and two such functions intersect in no more than one point, distinct from zero.


Thus, for all $i\in \{1,\ldots,n\}$ define a set
\[
M_{i}=\left\{ x\in X:\sigma_{i}^{-1}(V_{i}(x_{i})) > \sigma_{j}^{-1}(V_{j}(x_{j})),\,\,\forall j=1,\ldots,n,\ j\neq i\right\}.
\]
Let $x \in M_i$ and $V_i(x_i) \in S^i_j$. Then the condition \eqref{smallgaincondition} implies
(see the proof of \cite[Theorem 5]{DaM12b})
\begin{eqnarray*}
\frac{d}{dt}V(x)& = &\frac{d}{dt} ( \sigma_{i}^{-1}(V_i(x_i)) ) = \frac{d}{ds} ( a_{ij}s^{p_{ij}}) (V_i(x_i)) \frac{d}{dt} (V_i(x_i))
\end{eqnarray*}
Now using \eqref{GainImplikationNSyst_Imp} and \eqref{Lyap_ExpFunk} we have
\begin{eqnarray*}
\frac{d}{dt}V(x)    &\leq& - c_i a_{ij} p_{ij} (V_i(x_i))^{p_{ij}} \leq -c  V(x),
\end{eqnarray*}
where $c=\min_{i,j}\{ c_i p_{ij} \}$.

We have to prove, that the function $\alpha$ from \eqref{Alpha_Def} can be estimated from above by linear function.
We choose
$\alpha^*:=\tilde{\alpha}=\max_{i} \{ \sigma_{i}^{-1} \circ \alpha_i \circ \sigma_{i} \}$.

For any fixed $t\geq 0$ it holds that $\sigma_{i}^{-1} \circ \alpha_i \circ \sigma_{i}(t) =const=:c_i$ since $\alpha_i$ are linear and $\sigma_{i}^{-1}$ are piecewise power functions.
This implies that for some constant $k$ it holds that $\alpha^*(t) \leq  k t$ for all $t \geq 0$.

Since function $\eta$ from the proof of Theorem~\ref{KleinGainSatz_ImpSys} satisfies $\eta < id$, it is clear that one can take $\alpha:=\max\{k,1\}\Id$, and the theorem is proved.
\end{proof}

\begin{remark}
The obtained exponential ISS-Lyapunov function can be transformed to the implication form with the help of Proposition~\ref{Prop:ExpLyapEquiv}. Then Theorem~\ref{ExpCase} can be used in order to verify ISS of the system \eqref{KopplungHauptSys_Imp}.
\end{remark}


%
Let us demonstrate how one can analyze stability of interconnected impulsive systems on a simple example.
Let $T = \{t_k\}$ be a sequence of impulse times. Consider two interconnected nonlinear impulsive systems
\begin{align*}
\dot{x}_1(t) =&\ -x_1(t) + x_2^2(t), \ t \notin T,\\
x_1(t)=&\ e^{-1} x_1^-(t),  \ t \in T
\end{align*}
and
\begin{align*}
\dot{x}_2(t) =&\ -  x_2(t) + 3 \sqrt{|x_1(t)|},  \ t \notin T,\\
x_2(t)=&\  e^{-1} x_2^-(t),  \ t \in T.
\end{align*}

Both subsystems are uniformly ISS (even strongly uniformly ISS, see \cite{HLT08}) for all impulse time sequences, since continuous and discrete dynamics stabilize the subsystems and one can easily construct exponential ISS Lyapunov functions (with certain Lyapunov gains) with positive rate coefficients for both subsystems.
However, any admissible Lyapunov gains, corresponding to such ISS-Lyapunov functions will not satisfy small-gain condition, since the continuous dynamics of the interconnected system is not stable.
Therefore in order to find the classes of impulse time sequences for which the interconnected system is GAS, we have to seek for ISS-Lyapunov functions (and corresponding Lyapunov gains) with one negative rate coefficient.

Take the following exponential ISS-Lyapunov functions and Lyapunov gains for subsystems
\begin{align*}
V_1(x_1)=|x_1|, \quad   \gamma_{12}(r)=\tfrac{1}{a} r^2,\\
V_2(x_2)=|x_2|, \quad   \gamma_{21}(r)=\tfrac{1}{b} \sqrt{r},
\end{align*}
where $a,b>0$. We have the following implications
\begin{align*}
|x_1| \geq \gamma_{12} (|x_2|) \Rightarrow \dot{V}_1(x_1) \leq  (a-1) V_1(x_1),\\
|x_2| \geq \gamma_{21} (|x_1|) \Rightarrow \dot{V}_2(x_2) \leq  (3b-1) V_2(x_2).
\end{align*}
The small-gain condition
\begin{align}
\label{KGB_NichtLin}
\gamma_{12} \circ \gamma_{21} (r) = \frac{1}{ab^2}  r < r, \quad \forall r>0
\end{align}
is satisfied, if it holds
\begin{align}
\label{KGB_UnserSyst}
h(a,b):=ab^2 >1.
\end{align}
Take an arbitrary constant $s$ such that $\frac{1}{b} < \frac{1}{s} < \sqrt{a}$. Then $\Omega$-path can be chosen as
\[
\sigma_1(r)=r,\quad  \sigma_2(r)=\frac{1}{s} \sqrt{r},\ \forall r \geq 0.
 \]
Then
\[
\sigma_2^{-1}(r)=s^2 r^2,\ \forall r \geq 0.
\]
In this case an ISS-Lyapunov function for the interconnection, constructed by small-gain design, is given by
\begin{align*}
V(x)=\max\{ |x_1|, s^2 |x_2|^2  \} , \quad \text{where } \frac{1}{b} < \frac{1}{s} < \sqrt{a} \text{ and } x=(x_1,x_2)^T
\end{align*}
and we have the estimate
 \begin{align}
\label{LyapAbsch_Diskr}
V(g(x))=V(e^{-1} x) \leq e^{-1} V(x).
\end{align}
Thus, we can take $d=-1$ for the interconnection. The estimates of the continuous dynamics for $V$ are as follows:
For $|x_1| \geq s^2 x_2^2>\frac{1}{a}x_2^2=\gamma_{12}(|x_2|)$ it holds
\[
\frac{d}{dt}V(x)= \frac{d}{dt} |x_1| \leq (a-1)  |x_1| = (a-1)V(x),
\]
and $|x_1| \leq s^2 x_2^2 < \gamma_{21}^{-1} (|x_2|) $ implies
\[
\frac{d}{dt}V(x)= \frac{d}{dt} \left(s^2 x_2^2 \right) = \frac{d}{dt} \left(s^2 V_2(x_2)^2\right) \leq    2(3b-1) s^2 |x_2|^2 = 2(3b-1)V(x).
\]
Overall, for all $x$ we have:
\begin{align}
\label{LyapAbsch_Cont}
\frac{d}{dt} V(x) \leq \max\{(a-1), 2(3b-1) \} V(x).
\end{align}

Function $h$, defined by \eqref{KGB_UnserSyst}, is increasing w.r.t. both arguments (since $a,b >0$), hence in order to minimize $c:= \max\{(a-1), 2(3b-1) \}$, we have to choose $(a-1)= 2(3b-1)$. Then, from \eqref{KGB_NichtLin} we obtain the inequality
\begin{align*}
(1+2(3b-1)) b^2 > 1.
\end{align*}

Thus, the best choice for $b$ is $b \approx 0.612$ and $V$ is an exponential ISS-Lyapunov function for an interconnection with rate coefficients with $d=-1$ and $c=2 \cdot(3 \cdot 0.612 - 1) = 1.672$.

The ISS-Lyapunov function for an interconnection is constructed, and one can apply Theorem \ref{ExpCase} in order to obtain the classes of impulse time sequences for which the interconnection is GAS.
%
%

\subsection{Relation between small-gain and dwell-time conditions}
\label{DT_SGC_Relation}

So far we have seen how small-gain and dwell-time conditions can be
used to verify stability of interconnected systems. The small gain
condition \eqref{smallgaincondition} requires that the gains of subsystems must be small
enough so that their cycle compositions are less then the identity, namely
\begin{eqnarray}
\label{ZyklenBedin}
\gamma_{k_1k_2} \circ \gamma_{k_2k_3} \circ \ldots \circ \gamma_{k_{p-1}k_p} (s) < s
\end{eqnarray}
for all $(k_1,...,k_p) \in \{1,...,n\}^p$, where $k_1=k_p$ and for all $s>0$.
The condition in cyclic form \eqref{ZyklenBedin} is equivalent to the condition \eqref{smallgaincondition}, see \cite{DRW07}, and is widely used in the literature \cite{JiW08}.

In particular a large gain of one subsystem can
be compensated by a small gain of another one to satisfy
\eqref{smallgaincondition}. A choice of gains depends on the choice
of an ISS-Lyapunov function in its turn.

The dwell-time condition is imposed on $\alpha$ and $\varphi$ from
\eqref{ISS_LF_Imp} or the rate coefficients $c$ and $d$ in case of
exponential ISS-Lyapunov functions. It requires that the jumps happen
with a certain frequency.

The inequalities \eqref{ISS_LF_Imp} show how fast the value of
$V(x(\cdot))$ changes outside of the region $\{x: V(x) <
\gamma(|u|)\}$ with the time $t$. In the previous example we have
seen that the larger is the gain function, the larger the rate
coefficients $c$ and $d$ can be chosen and hence the more impulse
time sequences satisfy the dwell-time condition
\eqref{Dwell-Time-Cond}. However in case of interconnected systems
large gains may lead to the situation, where the small-gain condition is not satisfied.
Hence there is a trade-off between the size of the gains (which we
want to have as small as possible) and the decay rate of
$V(x(\cdot))$. This leads to interdependence in the choice of gains
and rate coefficients in the stability analysis of interconnected
systems. In general case this dependence is rather involved. To shed
light on this issue we restrict ourselves in this section to the case of
systems possessing exponential ISS-Lyapunov functions with linear
gains.

Consider an interconnected impulsive system of the form
\eqref{Kopplung_N_Systeme_Imp}, and assume that for each $i$ there
is a positive definite and radially unbounded continuous
function $V_i$ for the $i$-th subsystem, such that for almost all
$x_i \in X_i$ and all $u \in U$ the following
dissipative inequalities hold:
\begin{equation}
\label{Dissip_Cont_Inequality}
\dot{V}_i(x_i) \leq -\tilde{c} V_i(x_i) + \max_{j \neq i}\{ {\chi}_{ij} V_j(x_j),\chi_i(\|u\|_U)\},
\end{equation}
\begin{equation}
\label{Dissip_Jumps_Inequality}
V_i(g_i(x,u)) \leq \max\{ e^{- {d}} V_i(x_i),\chi_i(\|u\|_U)\},
\end{equation}
where $\chi_{ij} \in \R_+$, $\tilde{c},  {d} \in \R$, and
$\chi_i\in{\cK}$ can be nonlinear functions. We have assumed here
for simplicity, that the subsystem affect each other during
continuous flow only. At the impulse times the jumps of subsystems
are independent on each other.

Let us illustrate the trade-off mentioned above. By the inequalities
\eqref{Dissip_Cont_Inequality} and \eqref{Dissip_Jumps_Inequality}
function $V_i$ is an ISS-Lyapunov function in dissipative
formulation for the $i$-th subsystem, see \cite{HLT08}. This form
provides us with a freedom to choose the gains during transformation
of equation \eqref{Dissip_Cont_Inequality} from the dissipation form
into the implication form which we need in order to apply
Theorem~\ref{KleinGain_Sonderfall}.

Let $k \in (0,\infty)$ be the scaling coefficient that allows to
adjust the gains to satisfy the small-gain condition. We define
\begin{equation}
\label{GainsDef} \gamma_{ij}:= \frac{1}{k} \chi_{ij}, \quad
\gamma_i:= \frac{1}{k} \chi_i, \quad
\Gamma_k:=(\gamma_{ij})_{i,j=1,\ldots,n}.
\end{equation}
 If
\begin{align*}
\max_{j \neq i}\{ {\gamma}_{ij} V_j(x_j),\gamma_i(\|u\|_U)\} =
\frac{1}{k} \max_{j \neq i}\{ {\chi}_{ij} V_j(x_j),\chi_i(\|u\|_U)\}
\leq V_i(x_i)
\end{align*}
holds, then it follows from \eqref{Dissip_Cont_Inequality} that
\begin{align}
\label{Optimal_Estimate} \dot{V}_i(x_i) \leq  \left(- \tilde{c}+ k
\right) V_i(x_i):= - c_{k} V_i(x_i),
\end{align}
holds for almost all $x_i$, with $c_k:=\tilde{c}-k$.

This shows that $V_i$ is an exponential ISS-Lyapunov function of the
$i$-th subsystem in the sense of Definition~\ref{ISS_Imp_LF} with
the rate coefficients $c_k$ and $d$ and gains $\gamma_{ij}$ for
which our small-gain theorem can be applied.


Define the linear operator $\Gamma_k:\R^n_+ \to \R^n_+$ by
$(\Gamma_k(s))_i =  \max\limits_{j \neq i} \{\gamma_{ij} s_j\}$.
For this operator the small-gain condition \eqref{smallgaincondition} is equivalent (see \cite{DRW06b})  to
\[
\rho(\Gamma_k)<1 \quad\Leftrightarrow\quad \rho:=\rho\big( (\chi_{ij})_{i,j=1}^n\big)<k,
\]
where $\rho(\cdot)$ denotes the spectral radius of a matrix.

In this case according to Theorem~\ref{KleinGain_Sonderfall} an
exponential Lyapunov function can be constructed, moreover, an
$\Omega$-path can be chosen as a vector of linear functions and the
rate coefficients of the ISS-Lyapunov function for a whole system
will be $c_k$ and $d$.

If $k \in (\rho, \tilde{c})$ and $d>0$, then both rate coefficients
of the exponential ISS-Lyapunov functions $V_i$ are positive and
hence the system under consideration is ISS for all impulsive time
sequences.

Let us consider the case when $d<0$ and $k \in (\rho, \tilde{c})$,
when the rate coefficients are of different signs, and consequently
one has to use dwell-time conditions in order to find the classes of
impulse time sequences w.r.t. which the system is ISS.

The dwell-time condition \eqref{Classical_Dwell-Time-Cond} for $d<0$
reads in this situation as
\begin{equation}
\label{Dwell-Time-Equiv}
N(t,s) \leq \frac{1}{-d} (\mu+(c-\lambda)(t-s)) =
 \mu^{'}+(\frac{c}{-d}-\lambda^{'})(t-s) ,\ \forall t\geq s \geq t_0,
\end{equation}
where $\mu^{'}=\frac{\mu}{-d}$ and $\lambda^{'}=\frac{\lambda}{-d}$.

For given $c,d,\lambda,\mu$ denote the set of impulse time sequences, which satisfies \eqref{Dwell-Time-Equiv} by $\Simp_{c,d}[\mu,\lambda]$.

Take $c_1,c_2>0$ and $d_1,d_2<0$ such that $\frac{c_1}{-d_1} >
\frac{c_2}{-d_2}$. Then $\forall \lambda_2,\mu_2>0$ $\exists
\lambda_1,\mu_1>0$: $\Simp_{c_2,d_2}[\mu_2,\lambda_2] \subset
\Simp_{c_1,d_1}[\mu_1,\lambda_1]$. Thus, the set
$\Simp_{c,d}[\mu,\lambda]$ crucially depends on the value of
$\frac{c}{-d}$. We will call $\frac{c}{-d}$ the frequency of impulse
times.

For the gains as in \eqref{GainsDef} the frequency of impulse times is equal to
\begin{equation}
\label{Omega_Def}
\omega(k):=\frac{c_k}{-d}= \frac{\tilde{c}-k}{-d},
\end{equation}
and the possible values of $k$ are contained in $(\rho,\tilde{c})$.
It is clear that $\omega$ is decreasing w.r.t. $k$ on the interval $(\rho,\tilde{c})$, as well as the gains $\Gamma_k$ defined by \eqref{GainsDef}.


%
%
%

We summarize our investigations in the following proposition:
\begin{proposition}
Let $V_i$ be an ISS-Lyapunov function for the $i$-th subsystem, $i=1,\ldots,n$ and the inequalities
\eqref{Dissip_Cont_Inequality} and \eqref{Dissip_Jumps_Inequality} hold with $d<0$ and $\tilde{c} > \rho$ and let the gains are defined as in \eqref{GainsDef}.
Then the possible values of $k$ are contained in $(\rho,\tilde{c})$, and on this interval the smaller are the gains, the smaller is the frequency of impulses allowed by dwell-time condition.
Moreover, $\lim_{k \to \rho}\rho (\Gamma_k) = 1$ and
$\lim_{k \to \tilde{c}}\omega(k) = 0$.

\end{proposition}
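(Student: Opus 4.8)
The plan is to assemble the statement from the monotonicity observations made in the preceding discussion, viewing every relevant quantity as a function of the single scaling parameter $k$ introduced in \eqref{GainsDef}.

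First I would fix the admissible range of $k$. The lower bound is forced by the small-gain condition: by the equivalence recalled just before the proposition (see \cite{DRW06b}), the operator $\Gamma_k=\tfrac{1}{k}(\chi_{ij})$ satisfies \eqref{smallgaincondition} if and only if $\rho(\Gamma_k)<1$, i.e. $\rho<k$. Only under this condition does Theorem~\ref{KleinGain_Sonderfall} apply and produce an exponential ISS-Lyapunov function for the whole system with rate coefficients $c_k$ and $d$. The upper bound is forced by insisting that this constructed function decay along the continuous flow, i.e. that $c_k=\tilde{c}-k>0$; this requires $k<\tilde{c}$ (otherwise, with $d<0$ as well, neither the continuous nor the discrete dynamics is stabilizing and no dwell-time condition can rescue ISS). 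Hence every admissible $k$ lies in $(\rho,\tilde{c})$, an interval that is non-empty precisely because $\tilde{c}>\rho$ by assumption.

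Next I would record the two relevant monotonicities on this interval. Each entry $\gamma_{ij}(k)=\chi_{ij}/k$ is strictly decreasing in $k$, so the gain operator $\Gamma_k$ shrinks as $k$ grows; simultaneously the frequency $\omega(k)=(\tilde{c}-k)/(-d)$ from \eqref{Omega_Def} is strictly decreasing in $k$, since $-d>0$. Thus increasing $k$ both diminishes the gains and lowers the frequency. Combining this with the earlier observation that the admissible class $\Simp_{c,d}[\mu,\lambda]$ grows with the frequency $c/(-d)$ (a larger frequency admits a larger class of impulse time sequences), one obtains exactly the asserted trade-off: the smaller the gains, i.e. the larger $k$, the smaller the frequency of impulses permitted by the dwell-time condition.

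Finally the two limits follow once the scaling of the spectral radius is pinned down. Since $\Gamma_k=\tfrac{1}{k}(\chi_{ij})$ and the spectral radius is positively homogeneous of degree one, $\rho(\Gamma_k)=\rho/k$, so $\lim_{k\to\rho}\rho(\Gamma_k)=\rho/\rho=1$; and directly from \eqref{Omega_Def}, $\lim_{k\to\tilde{c}}\omega(k)=(\tilde{c}-\tilde{c})/(-d)=0$. I expect the only point needing genuine care to be the identity $\rho(\Gamma_k)=\rho/k$: one must be sure that the notion of spectral radius entering the cited equivalence of \cite{DRW06b} for the max-type gain operator is indeed the homogeneous spectral radius of the nonnegative matrix $(\chi_{ij})$, so that the left endpoint $k=\rho$ is exactly the onset of failure of the small-gain condition.
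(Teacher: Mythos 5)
Your proposal is correct and takes essentially the same route as the paper: the paper supplies no separate proof for this proposition but explicitly presents it as a summary of the preceding discussion (admissibility of $k\in(\rho,\tilde{c})$ from the small-gain condition and the sign of $c_k=\tilde{c}-k$, monotone decrease of the gains $\gamma_{ij}=\chi_{ij}/k$ and of $\omega(k)=(\tilde{c}-k)/(-d)$ in $k$, and the equivalence $\rho(\Gamma_k)<1\Leftrightarrow\rho<k$), which is exactly the argument you reconstruct. The one point you flag is harmless: whichever notion of spectral radius is meant in the cited equivalence from \cite{DRW06b} (Perron root or max-algebraic cycle mean of the nonnegative matrix $(\chi_{ij})$), both are positively homogeneous of degree one, so $\rho(\Gamma_k)=\rho/k$ and hence $\lim_{k\to\rho}\rho(\Gamma_k)=1$ holds in either reading.
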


\section[Concluding remarks]{Concluding remarks and open questions}
\label{Schluss}

We developed Lyapunov-type stability conditions for impulsive systems for the case when an ISS-Lyapunov function is of general type (nonexponential) as well as when an ISS-Lyapunov function is exponential. To provide the classes of impulse time sequences, for which the system is ISS, we have used nonlinear fixed dwell-time condition from \cite{SaP95} as well as generalized average dwell-time (gADT) condition, which contains ADT condition from \cite{HLT08} as a special case.
The small-gain theorems and linearization method have been generalized to the case of impulsive systems in Sections \ref{Linearisierung_Imp} and \ref{ISS_Kopplungen}. Also we have shown the relation between small-gain and dwell-time condition.

An interesting direction for a future research is a development of the theory of interconnected impulsive systems which subsystems have different sequences of impulse times.

\bibliographystyle{plain}

\bibliography{literatur}

\end{document}